\documentclass[11pt]{article}
\usepackage{amsthm,amstext}
\usepackage{amsmath}
\usepackage{graphicx}
\usepackage{amsfonts}
\usepackage{amssymb}
\theoremstyle{plain}
\newtheorem{theorem}{Theorem}[section]

\newtheorem{proposition}[theorem]{Proposition}

\theoremstyle{definition}
\newtheorem{definition}[theorem]{Definition}

\newtheorem{example}{\sc Example}
\theoremstyle{remark}
\newtheorem{remark}{\sc Remark}

\textwidth        =14.5cm \textheight       =21cm
\date{}
\title{\bf On $Q$-Fuzzy Ideals in $\Gamma$-Semigroups}\vspace{.25 in}
\author{ {\bf Samit Kumar Majumder}\\
Tarangapur N.K High School, Tarangapur,\\
Uttar Dinajpur, West Bengal-733 129, INDIA\\
{\tt samitfuzzy@gmail.com}
 }

\begin{document}
\maketitle

\begin{abstract}

In this paper the concept of $Q$-fuzzification of ideals of $\Gamma$-semigroups has been introduced and some important properties have been investigated. A characterization of regular $\Gamma$-semigroup in terms of $Q$-fuzzy ideals has been obtained. Operator semigroups of a $\Gamma$-semigroup has been made to work by obtaining various relationships between $Q$-fuzzy ideals of a $\Gamma$-semigroup and that of its operator semigroups. \\

\textbf{AMS Mathematics Subject Classification(2000):}\textit{\ }08A72,
20M12, 3F55\\

\textbf{Key Words and Phrases:}\textit{\ } $Q$-Fuzzy set, $\Gamma$-Semigroup, Regular $\Gamma$-semigroup, $Q$-Fuzzy left$($right$)$ ideal, Operator semigroups.
\end{abstract}

\section{Introduction}
A  semigroup is an algebraic structure consisting of a non-empty
set $S$ together with an associative binary operation\cite{H}. The
formal study of semigroups began in the early 20th century.
Semigroups are important in many areas of mathematics, for
example, coding and language theory, automata theory,
combinatorics and mathematical analysis. The concept of fuzzy sets
was introduced by \textit{Lofti Zadeh}\cite{Z} in his classic
paper in 1965. \textit{Azirel Rosenfeld}\cite{R} used the idea of
fuzzy set to introduce the notions of fuzzy subgroups.
\textit{Nobuaki Kuroki}\cite{K1,K2,K3} is the pioneer of fuzzy
ideal theory of semigroups. The idea of fuzzy subsemigroup was
also introduced by \textit{Kuroki}\cite{K1,K3,Mo}. In \cite{K2},
\textit{Kuroki} characterized several classes of semigroups in
terms of fuzzy left, fuzzy right and fuzzy bi-ideals. Others who
worked on fuzzy semigroup theory, such as \textit{X.Y.
Xie}\cite{X1,X2}, \textit{Y.B. Jun}\cite{J}, are mentioned in the
bibliography. \textit{X.Y. Xie}\cite{X1} introduced the idea of
extensions of fuzzy ideals in semigroups.

The notion of a $\Gamma$-semigroup was introduced by \textit{Sen} and
\textit{Saha}\cite{Sen2} as a generalization of semigroups and ternary
semigroup. $\Gamma$-semigroup have been analyzed by lot of
mathematicians, for instance by \textit{Chattopadhyay}\cite{C1,C2}, \textit{Dutta} and
\textit{Adhikari}\cite{D1,D2}, \textit{Hila}\cite{H1,H2}, \textit{Chinram}\cite{Ch}, \textit{Saha}\cite{Sa}, \textit{Sen} et al.\cite{Sen1,Sen2,Sa}, \textit{Seth}\cite{Se}. \textit{S.K. Sardar} and \textit{S.K. Majumder}\cite{D3,D4,S1,S2} have introduced the notion of fuzzification of ideals, prime ideals, semiprime ideals and ideal extensions of $\Gamma$-semigroups and studied them via its operator semigroups. In this paper the concept of $Q$-fuzzy ideals of a $\Gamma$-semigroup has been introduced. It is observed here that they satisfy level subset crieterion as well as characteristic function crieterion. Finally in order to make operator semigroups of a $\Gamma$-semigroup to work in the context of $Q$-fuzzy sets as it worked in the study of $\Gamma$-semigroups\cite{D1,D2}, we obtain various relationships between $Q$-fuzzy ideals of a $\Gamma$-semigroup and that of its operator semigroups. Here, among other results we obtain an inclusion preserving bijection between the set of all $Q$-fuzzy ideals of a $\Gamma$-semigroup and that of its operator semigroups.
\section{Preliminaries}

In this section we discuss some elementary definitions that we use
in the sequel.\\

\begin{definition}
\cite{Sen1} Let $S=\{x,y,z,.....\}$ and
$\Gamma=\{\alpha,\beta,\gamma,.....\}$ be two non-empty sets. Then
$S$ is called a $\Gamma$-semigroup if there exist a mapping
$S\times\Gamma\times S\rightarrow S($images are denoted by
$a\alpha b)$ satisfying $(1)$ $x\gamma y\in S,$ $(2)$ $(x\beta
y)\gamma z=x\beta(y\gamma z)$ for all $x,y,z\in S$ and
$\beta,\gamma\in\Gamma.$
\end{definition}

\begin{example}
Let $\Gamma=\{5,7\}.$ For any $x,y\in N$ and $\gamma\in\Gamma,$ we define $x\gamma y=x.\gamma.y$ where $.$ is the usual multiplication on $N.$ Then $N$ is a $\Gamma$-semigroup.
\end{example}

\begin{remark}
Definition $2.1$ is the definition of one sided
$\Gamma$-semigroup. Both sided $\Gamma$-semigroup was defined by
Dutta and Adhikari\cite{D1} where the operation $\Gamma\times
S\times\Gamma\rightarrow\Gamma$ also taken into consideration.
They defined operator semigroups for such $\Gamma$-semigroups. The
following definition is the definition of both sided
$\Gamma$-semigroup given by Dutta and Adhikari.
\end{remark}

\begin{definition}
\cite{D1} Let $S$ and $\Gamma$ be two non-empty sets. $S$ is called
a $\Gamma$-semigroup if there exist mappings from
$S\times\Gamma\times S$ to $S,$ written as
$(a,\alpha,b)\rightarrow a\alpha b,$ and from $\Gamma\times
S\times\Gamma\rightarrow\Gamma,$ written as
$(\alpha,a,\beta)\rightarrow\alpha a\beta$ satisfying the
following associative laws: $(a\alpha b)\beta c=a(\alpha b)\beta
c=a\alpha (b\beta c)$ and $\alpha(a\beta b)\gamma=(\alpha a\beta)
b\gamma=\alpha a(\beta b\gamma)$ for all $a,b,c\in S$ and for all
$\alpha,\beta,\gamma\in\Gamma.$
\end{definition}

\begin{example}
\cite{D1} Let $S$ be the set of all integers of the form $4n+1$ and $\Gamma$ be the set of all integers of the form $4n+3$ where $n$ is an integer. If $a\alpha b$ is $a+\alpha+b$ and $\alpha+a+\beta($usual sum of integers$)$ for all $a,b\in S$ and for all $\alpha,\beta\in\Gamma.$ Then $S$ is a $\Gamma$-semigroup.
\end{example}

\begin{definition}
\cite{D1} Let $S$ be a $\Gamma$-semigroup. By a left$($right$)$ ideal of $S$ we mean a non-empty subset $A$ of $S$ such that $S\Gamma A\subseteq A(A\Gamma S\subseteq A).$ By a two sided ideal or simply an ideal, we mean a non-empty subset of $S$ which is both a left and right ideal of $S.$
\end{definition}

\begin{definition}
\cite{D1} A $\Gamma$-semigroup is called regular if, for each element $x\in S,$ there exist $\beta\in\Gamma$ such that $x=x\beta x.$
\end{definition}

\begin{definition}
\cite{Z} A fuzzy subset $\rho$ of a non-empty set $X$ is a function $\rho:X\rightarrow [0,1].$
\end{definition}

\begin{definition}
Let $\rho$ be a fuzzy subset of a non-empty set $X.$ Then the set $\rho_{t}=\{x\in X:\rho(x)\geq t\}$ for $t\in [0,1],$ is called the level subset or $t$-level subset of $\rho.$
\end{definition}

\begin{definition}
Let $Q$ and $X$ be two non-empty sets. A mapping $\mu:X\times Q\rightarrow [0,1]$ is called the $Q$-fuzzy subset of $X.$
\end{definition}

\begin{definition}
Let $\mu$ be a $Q$-fuzzy subset of a non-empty set $X.$ Then the set $\mu_{t}=\{x\in X:\mu(x,q)\geq t\forall q\in Q\}$ for $t\in [0,1],$ is called the level subset or $t$-level subset of $\mu.$
\end{definition}

\begin{example}
Let $S=\{a,b,c\}$ and $\Gamma=\{\gamma,\delta\},$ where $\gamma,\delta$ is defined on $S$ with the following caley table: \\\\
$%
\begin{tabular}{|c|c|c|c|}
\hline
$\gamma $ & $a$ & $b$ & $c$ \\ \hline
$a$ & $a$ & $a$ & $a$ \\ \hline
$b$ & $b$ & $b$ & $b$ \\ \hline
$c$ & $c$ & $c$ & $c$ \\ \hline
\end{tabular}%
$ \ \ \ \ and \ \ \ \  $%
\begin{tabular}{|c|c|c|c|}
\hline
$\delta $ & $a$ & $b$ & $c$ \\ \hline
$a$ & $a$ & $a$ & $a$ \\ \hline
$b$ & $b$ & $b$ & $b$ \\ \hline
$c$ & $c$ & $c$ & $c$ \\ \hline
\end{tabular}%
$\\

Then $S$ is a $\Gamma$-semigroup. Let $Q=\{p\}.$ Let us consider a $Q$-fuzzy subset $\mu: S\times Q\rightarrow [0,1],$ by $\mu(a,p)=0.8,\mu(b,p)=0.7,\mu(c,p)=0.6.$ For $t=0.7,$ $\mu_{t}=\{a,b\}.$
\end{example}


\section{$Q$-Fuzzy Ideals}

\begin{definition}
A non-empty $Q$-fuzzy subset $\mu$ of a $\Gamma$-semigroup $S$ is called a $Q$-fuzzy left ideal of $S$ if $\mu(x\gamma y,q)\geq\mu(y,q)\forall x,y\in S,\forall\gamma\in\Gamma$ and $\forall q\in Q.$
\end{definition}

\begin{definition}
A non-empty $Q$-fuzzy subset $\mu$ of a $\Gamma$-semigroup $S$ is called a $Q$-fuzzy right ideal of $S$ if $\mu(x\gamma y,q)\geq\mu(x,q)\forall x,y\in S,\forall\gamma\in\Gamma$ and $\forall q\in Q.$
\end{definition}

\begin{definition}
A non-empty $Q$-fuzzy subset of a $\Gamma$-semigroup $S$ is called a $Q$-fuzzy ideal of $S$ if it is both a $Q$-fuzzy left ideal and a $Q$-fuzzy right ideal of $S.$
\end{definition}

\begin{example}
Let $S$ be the set of all non-positive integers and $\Gamma$ be the set of all non-positive even integers. Then $S$ is a $\Gamma$-semigroup if $a\gamma b$ and $\alpha a\beta$ denote the usual multiplication of integers $a,\gamma,b$ and $\alpha,a,\beta$ respectively where $a,b\in S$ and $\alpha,\beta,\gamma\in\Gamma.$ Let $Q=\{p\}.$ Let $\mu$ be a $Q$-fuzzy subset of $S$ defined as follows:

\begin{align*}
\mu(x,p)=\left\{
\begin{array}{ll}
1 & \text{if} \ x=0 \\
0.1 & \text{if} \ x=-1,-2 \\
0.2 &  \text{if} \ x<-2
\end{array}
\right..
\end{align*}

Then $\mu$ is a $Q$-fuzzy ideal of $S.$
\end{example}

\begin{theorem}
Let $I$ be a non-empty subset of a $\Gamma$-semigroup $S$ and $\chi_{I\times Q}$ be the characteristic function of $I\times Q$ then $I$ is a left ideal$($right ideal, ideal$)$ of $S$ if and only if $\chi_{I\times Q}$ is a $Q$-fuzzy left ideal$($resp. $Q$-fuzzy right ideal, fuzzy ideal$)$ of $S$.
\end{theorem}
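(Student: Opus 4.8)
The plan is to recognize that the characteristic function here is the map $\chi_{I\times Q}\colon S\times Q\to[0,1]$ sending $(x,q)$ to $1$ when $x\in I$ and to $0$ when $x\notin I$ (the value being independent of $q$, since $q$ ranges over all of $Q$). Because the definition of a $Q$-fuzzy ideal is the conjunction of the left and right conditions, and the right-ideal case is the mirror image of the left-ideal case, I would prove in full only the equivalence ``$I$ is a left ideal $\iff$ $\chi_{I\times Q}$ is a $Q$-fuzzy left ideal,'' then remark that the right-ideal statement follows by the symmetric argument and the two-sided statement by combining the two. Each equivalence splits into the two implications of an iff, so the real work is four short arguments of the same flavor.

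For the forward implication I would assume $S\Gamma I\subseteq I$ and verify $\chi_{I\times Q}(x\gamma y,q)\geq\chi_{I\times Q}(y,q)$ for all $x,y\in S$, $\gamma\in\Gamma$, $q\in Q$ by a case analysis on whether $y\in I$. If $y\notin I$ the right-hand side is $0$, and the inequality holds automatically since $\chi_{I\times Q}$ is nonnegative. If $y\in I$, then $x\gamma y\in S\Gamma I\subseteq I$, so both sides equal $1$ and the inequality is an equality. For the converse I would assume $\chi_{I\times Q}$ is a $Q$-fuzzy left ideal and show $S\Gamma I\subseteq I$: taking an arbitrary $x\in S$, $\gamma\in\Gamma$, $y\in I$, the defining inequality gives $\chi_{I\times Q}(x\gamma y,q)\geq\chi_{I\times Q}(y,q)=1$, and since $\chi_{I\times Q}$ takes values in $[0,1]$ this forces $\chi_{I\times Q}(x\gamma y,q)=1$, i.e.\ $x\gamma y\in I$.

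There is no genuine obstacle here; the argument is purely a bookkeeping exercise, and the two points that merit a line of care are (i) the use of the upper bound $1$ in the converse direction to turn ``$\geq 1$'' into membership in $I$, and (ii) the role of the hypothesis that $I$ is nonempty, which guarantees that $\chi_{I\times Q}$ is a nonempty $Q$-fuzzy subset and hence eligible to be a $Q$-fuzzy ideal in the sense of the definitions above. Once the left-ideal equivalence is settled, I would obtain the right-ideal equivalence verbatim after replacing the hypothesis $\mu(x\gamma y,q)\geq\mu(y,q)$ by $\mu(x\gamma y,q)\geq\mu(x,q)$ and $y\in I$ by $x\in I$, and the full ideal statement by intersecting the two conclusions.
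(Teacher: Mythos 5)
Your proposal is correct and follows essentially the same route as the paper: a case split on whether $y\in I$ for the forward direction, and in the converse the inequality $\chi_{I\times Q}(x\gamma y,q)\geq\chi_{I\times Q}(y,q)=1$ forcing $x\gamma y\in I$, with the right-ideal and two-sided cases handled by symmetry. Your explicit remarks on the upper bound $1$ and on the nonemptiness of $I$ are small touches of care the paper leaves implicit, but the argument is the same.
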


\begin{proof}
Let $I$ be a left ideal of a $\Gamma$-semigroup $S.$ Let $x,y\in S,q\in Q$ and $\gamma\in\Gamma,$ then $x\gamma y\in I$ if $y\in I.$ It follows that $\chi_{I\times Q}(x\gamma y,q)=1=\chi_{I\times Q}(y,q).$ If $y\notin I,$ then $\chi_{I\times Q}(y,q)=0.$ In this case $\chi_{I\times Q}(x\gamma y,q)\geq0=\chi_{I\times Q}(y,q)$ $.$ Therefore $\chi_{I\times Q}$ is a $Q$-fuzzy left ideal of $S.$

Conversely, let $\chi_{I\times Q}$ be a $Q$-fuzzy left ideal of $S.$ Let $x,y\in I,q\in Q,$ then
$\chi_{I\times Q}(x,q)=\chi_{I\times Q}(y,q)=1.$ Now let $x\in I$ and $s\in S,\gamma\in\Gamma,q\in Q.$
Then $\chi_{I\times Q}(x,q)=1.$ Also $\chi_{I\times Q}(s\gamma x,q)\geq\chi_{I\times Q}(x,q)=1.$ Thus
$s\gamma x\in I.$ So $I$ is a left ideal of $S.$ Similarly we can prove that the other parts of the theorem.
\end{proof}

\begin{proposition}
Let $I$ be a left ideal$($right ideal, ideal$)$ of a $\Gamma$-semigroup $S,$ $Q$ be any non-empty set
and $\alpha\leq\beta\neq0$ be any two elements in $[0,1]$, then the $Q$-fuzzy
subset $\mu$ of $\ S,$ defined by $\mu(x,q)$ =$\left\{
\begin{array}
[l]{l}%
\beta,\text{if }x\in I,q\in Q\\
\alpha,\text{otherwise}%
\end{array}
\right.  $ is a $Q$-fuzzy left ideal$($resp. $Q$-fuzzy right ideal, $Q$-fuzzy ideal$)$ of
$S.$
\end{proposition}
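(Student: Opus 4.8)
The plan is to verify the defining inequality of Definition 3.1 directly, by a two-case analysis that parallels the proof of Theorem 3.5 (which is exactly the special case $\alpha=0$, $\beta=1$). Since $\mu$ takes only the two values $\alpha$ and $\beta$, everything reduces to checking membership in $I$ and invoking $\alpha\leq\beta$.

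First I would treat the left ideal case. To show $\mu$ is a $Q$-fuzzy left ideal I must establish $\mu(x\gamma y,q)\geq\mu(y,q)$ for all $x,y\in S$, $\gamma\in\Gamma$, $q\in Q$, so I split according to whether $y\in I$. If $y\in I$, then since $I$ is a left ideal we have $x\gamma y\in S\Gamma I\subseteq I$, whence both $\mu(x\gamma y,q)$ and $\mu(y,q)$ equal $\beta$ and the inequality holds with equality. If $y\notin I$, then $\mu(y,q)=\alpha$, while $\mu(x\gamma y,q)$ is either $\beta$ or $\alpha$; in either subcase $\mu(x\gamma y,q)\geq\alpha=\mu(y,q)$ because $\alpha\leq\beta$. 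This settles the left ideal assertion.

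The right ideal case is entirely symmetric: using $I\Gamma S\subseteq I$ and splitting on whether $x\in I$, one checks $\mu(x\gamma y,q)\geq\mu(x,q)$ in exactly the same way. The ideal case then follows at once, since an ideal is both a left and a right ideal and the above shows $\mu$ is simultaneously a $Q$-fuzzy left ideal and a $Q$-fuzzy right ideal. It is worth recording the precise role of the two hypotheses: the assumption $\beta\neq0$ guarantees that $\mu$ is a \emph{non-empty} $Q$-fuzzy subset, as required by Definition 3.1, while the assumption $\alpha\leq\beta$ is exactly what forces the ``otherwise'' value never to exceed the value on $I$, so that the required monotonicity is automatic in the out-of-ideal case. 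I do not anticipate any genuine obstacle here; the only point demanding care is to keep both subcases of the out-of-ideal case in view (the product $x\gamma y$ may or may not itself lie in $I$), rather than silently assuming it falls outside $I$.
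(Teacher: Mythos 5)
Your proof is correct and follows essentially the same two-case argument as the paper: if the relevant element lies in $I$ then the product lies in $I$ and both values are $\beta$, and otherwise the value $\alpha\leq\beta$ makes the inequality $\mu(x\gamma y,q)\geq\mu(y,q)$ automatic. Your added remark that $\beta\neq 0$ ensures $\mu$ is a non-empty $Q$-fuzzy subset is a sensible observation the paper leaves implicit, but it does not change the substance of the argument.
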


\begin{proof}
Let $I$ be a left ideal of a $\Gamma$-semigroup $S$ and $\alpha,\beta\in\lbrack0,1].$ Let $s,x\in S,q\in Q$ and $\gamma\in\Gamma.$ If $x\in I,$ then $s\gamma x\in I$ and $\mu(x,q)=\beta=\mu(s\gamma x,q).$ Therefore $\mu(s\gamma
x,q)=\mu(x,q).$ If $x\notin I$ then $\mu(x,q)=\alpha\leq\beta$ and then $\mu(s\gamma x,q)\geq\mu(x,q).$ Thus $\mu(s\gamma x,q)\geq\mu(x,q)$ for all $x,s\in S,q\in Q$ and for all $\gamma\in\Gamma.$ Hence $\mu$ is a $Q$-fuzzy left ideal of $S.$ Similarly we can prove all other cases.
\end{proof}

\begin{theorem}
Let $S$ be a $\Gamma$-semigroup, $Q$ be any non-empty set and $\mu$ be a non-empty $Q$-fuzzy subset of $S,$ then $\mu$ is a $Q$-fuzzy left ideal$(Q$-fuzzy right ideal, $Q$-fuzzy ideal) of $S$ if and only if $\mu_{t}$'s are left ideals$($resp. right ideals, ideals$)$ of $S$ for all $t\in Im(\mu),$ where $\mu_{t}=\{x\in S:\mu(x,q)\geq t\forall q\in Q\}.$
\end{theorem}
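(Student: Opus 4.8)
The plan is to prove the biconditional in both directions for the left-ideal case, and then to obtain the right-ideal and two-sided cases by the evident symmetry (swapping the roles of the two arguments and using $A\Gamma S\subseteq A$ in place of $S\Gamma A\subseteq A$). The only inputs I need are Definition 2.3 (the inclusions defining an ordinary left/right ideal) and Definitions 3.1--3.3 (the defining inequalities of a $Q$-fuzzy left/right ideal); everything else is bookkeeping about the level sets $\mu_{t}$.

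For the forward implication, suppose $\mu$ is a $Q$-fuzzy left ideal and fix $t\in Im(\mu)$. I would first record that $\mu_{t}$ is non-empty, since $t$ is attained as a value of $\mu$ and so some element of $S$ witnesses membership. The substantive point is closure: given $x\in\mu_{t}$, $s\in S$ and $\gamma\in\Gamma$, the inequality $\mu(s\gamma x,q)\geq\mu(x,q)\geq t$ holds for every $q\in Q$, whence $s\gamma x\in\mu_{t}$. This gives $S\Gamma\mu_{t}\subseteq\mu_{t}$, so $\mu_{t}$ is a left ideal.

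For the converse, suppose every $\mu_{t}$ with $t\in Im(\mu)$ is a left ideal; I want $\mu(x\gamma y,q)\geq\mu(y,q)$ for all $x,y\in S$, $\gamma\in\Gamma$, $q\in Q$. The natural move is to put $t_{0}:=\mu(y,q)\in Im(\mu)$, observe $y\in\mu_{t_{0}}$, and invoke the fact that $\mu_{t_{0}}$ is a left ideal to get $x\gamma y\in\mu_{t_{0}}$, which unpacks to $\mu(x\gamma y,q)\geq t_{0}=\mu(y,q)$. A contrapositive variant --- assume $\mu(x\gamma y,q)<\mu(y,q)$ for some quadruple and pick a level $t$ strictly between the two values --- works equally well. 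The right-ideal case is the same argument with $\mu(x,q)$ replacing $\mu(y,q)$, and the two-sided case is the conjunction of the two.

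The step I expect to require real care is the mismatch between the single fixed $q$ occurring in Definitions 3.1--3.3 and the universal quantifier ``$\forall q\in Q$'' baked into the definition of $\mu_{t}$. The forward direction is untouched by this, because the defining inequality holds coordinatewise in $q$ and therefore propagates the ``for all $q$'' condition. In the converse, however, the claim $y\in\mu_{t_{0}}$ deduced from $t_{0}=\mu(y,q)$ secretly needs $\mu(y,q')\geq\mu(y,q)$ for every $q'\in Q$; this is immediate when $Q$ is a singleton --- the situation in every example of the paper --- but is not automatic otherwise. I would therefore either state the theorem in that setting or first verify the relevant flatness in $q$, and I regard this as the one genuine subtlety rather than routine verification.
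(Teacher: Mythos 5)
Your argument follows the paper's proof essentially step for step: in the forward direction you show $\mu_{t}\neq\emptyset$ and closure via $\mu(s\gamma x,q)\geq\mu(x,q)\geq t$, and in the converse you set $t_{0}=\mu(y,q)$ and invoke the ideal property of $\mu_{t_{0}}$, exactly as the paper does. What deserves emphasis is that the subtlety you flag in your final paragraph is genuine, and the paper's own proof commits it silently: in the converse the paper simply writes ``$\mu(x,q)=t\ \forall q\in Q$'', i.e., it assumes the value of $\mu(x,\cdot)$ does not depend on $q$, which is precisely the flatness you identify; without it, $y\in\mu_{t_{0}}$ need not hold when $Q$ has more than one element. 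One correction to your self-diagnosis, however: the forward direction is \emph{not} untouched by this issue. Your nonemptiness step (``$t$ is attained as a value of $\mu$, so some element witnesses membership'') has the same flaw as the paper's ``there exist some $\alpha\in S$ such that $\mu(\alpha,q)=t$ and so $\alpha\in\mu_{t}$'': knowing $t=\mu(\alpha,q_{0})$ for a single $q_{0}$ does not place $\alpha$ in $\mu_{t}$, which demands $\mu(\alpha,q)\geq t$ for \emph{all} $q$. Concretely, take $Q=\{q_{1},q_{2}\}$ and $\mu(x,q_{1})\equiv 0.3$, $\mu(x,q_{2})\equiv 0.7$ on any $\Gamma$-semigroup; then $\mu$ is a $Q$-fuzzy ideal and $0.7\in Im(\mu)$, yet $\mu_{0.7}=\emptyset$, which is not an ideal since Definition 2.3 requires ideals to be non-empty. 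So both directions need the same repair you propose for the converse: assume $Q$ is a singleton, assume $\mu$ is flat in $q$, or restrict the quantification to those $t$ for which $\mu_{t}\neq\emptyset$ (the convention the paper itself adopts in Propositions 5.7 and 5.8). With any of these in place, your proof is complete and coincides with the paper's.
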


\begin{proof}
Let $\mu$ be a $Q$-fuzzy left ideal of $S.$ Let $t\in\operatorname{Im}\mu,$ then there exist some $\alpha\in S$ such that $\mu(\alpha,q)=t$ and so $\alpha\in\mu_{t.}$ Thus $\mu_{t}\neq\phi.$ Let $x,y\in\mu_{t}$, then $\mu(x,q)\geq t$ and $\mu(y,q)\geq t.$ Again let $s\in S,x\in\mu_{t}$ and $\gamma\in\Gamma.$ Now
$\mu(s\gamma x,q)\geq\mu(x,q)\geq t.$ Therefore $s\gamma x\in\mu_{t.}$ Thus
$\mu_{t}$ is a left ideal of $S.$

Conversely, let $\mu_{t}$'s are left ideals of $S$ for all $t\in\operatorname{Im}\mu.$ Again let $x\in S,s\in S$ and $\gamma\in\Gamma,$ then $\mu(x,q)=t\forall q\in Q.$ Thus $s\gamma x\in\mu_{t}$ $($since $\mu_{t}$ is a left ideal of
$S).$ Therefore $\mu(s\gamma x,q)\geq t=\mu(x,q).$ Hence $\mu$ is a $Q$-fuzzy left ideal of $S.$ Similarly we can prove the other cases.
\end{proof}

\begin{remark}
Theorem  and  are true in case of semigroup also.
\end{remark}


\section{Composition of $Q$-Fuzzy Ideals}

In this section we define composition of $Q$-fuzzy ideals of a $\Gamma$-semigroup and characterize regular $\Gamma$-semigroups in terms of $Q$-fuzzy ideals.\newline

\begin{definition}
Let $S$ be a $\Gamma$-semigroup, $\mu_{1},\mu_{2}\in QFLI(S)[QFRI(S),QFI(S)]$ and $Q$ be a non-empty set
\footnote{$QFLI(S)$, $QFRI(S)$, $QFI(S)$ denote respectively the set of all $Q$-fuzzy left ideals, $Q$-fuzzy right ideals, $Q$-fuzzy ideals of a $\Gamma$-semigroup $S.$}. Then the product $\mu_{1}\circ\mu_{2}$ of $\mu_{1}$ and $\mu_{2}$ is defined as
\end{definition}

$(\mu_{1}\circ\mu_{2})(x,q)=\left\{
\begin{array}
[c]{c}%
\underset{x=u\gamma v}{\sup}[\min\{\mu_{1}(u,q),\mu_{2}(v,q)\}:u,v\in S;\gamma
\in\Gamma;q\in Q]\\
0,\text{ if for any }u,v\in S\text{ and for any }\gamma\in\Gamma,x\neq u\gamma
v
\end{array}
\right.  $

\begin{theorem}
Let $S$ be a $\Gamma$-semigroup and $Q$ be any non-empty set. Then following are equivalent: $(1)$ $\mu$ is a $Q$-fuzzy left$($right$)$ ideal of $S,$ $(2)$ $\chi\circ\mu\subseteq\mu(\mu\circ\chi\subseteq\mu),$ where $\chi$ is the characteristic function of $S\times Q.$
\end{theorem}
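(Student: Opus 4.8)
The plan is to prove both implications by directly unwinding the definition of the product $\circ$. The key preliminary observation is that, since $\chi$ is the characteristic function of $S\times Q$, we have $\chi(u,q)=1$ for every $u\in S$ and $q\in Q$; consequently, for any $x$ admitting a factorization $x=u\gamma v$,
\[
(\chi\circ\mu)(x,q)=\underset{x=u\gamma v}{\sup}\,\min\{\chi(u,q),\mu(v,q)\}=\underset{x=u\gamma v}{\sup}\,\mu(v,q),
\]
using $\mu(v,q)\le 1$. This reduction turns the claim into a comparison of $\mu(x,q)$ with a supremum of the values $\mu(v,q)$ taken over the right-hand factors $v$ of $x$.

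For $(1)\Rightarrow(2)$, assume $\mu\in QFLI(S)$ and fix $x\in S$, $q\in Q$. If $x$ has no factorization of the form $u\gamma v$, then $(\chi\circ\mu)(x,q)=0\le\mu(x,q)$. Otherwise, for each factorization $x=u\gamma v$ the left-ideal inequality gives $\mu(x,q)=\mu(u\gamma v,q)\ge\mu(v,q)$, so $\mu(x,q)$ is an upper bound for the set $\{\mu(v,q):x=u\gamma v\}$ and therefore dominates its supremum; hence $\mu(x,q)\ge(\chi\circ\mu)(x,q)$. As $x,q$ are arbitrary, $\chi\circ\mu\subseteq\mu$.

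For $(2)\Rightarrow(1)$, assume $\chi\circ\mu\subseteq\mu$. To verify the left-ideal condition, fix $a,b\in S$, $\gamma\in\Gamma$, $q\in Q$ and set $x=a\gamma b$. Since $x$ does admit the factorization $a\gamma b$, the particular decomposition $(a,\gamma,b)$ furnishes a lower bound for the supremum, so $(\chi\circ\mu)(x,q)=\sup_{x=u\delta v}\mu(v,q)\ge\mu(b,q)$. Combined with the hypothesis $\mu(x,q)\ge(\chi\circ\mu)(x,q)$, this yields $\mu(a\gamma b,q)\ge\mu(b,q)$, which is exactly the defining inequality for a $Q$-fuzzy left ideal.

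The right-ideal case is entirely symmetric: one instead uses $(\mu\circ\chi)(x,q)=\sup_{x=u\gamma v}\mu(u,q)$ and argues with the left-hand factor $u$ in place of $v$. I do not expect a genuine obstacle here; the only points requiring care are the handling of the supremum — in the forward direction, that an upper bound for every member dominates the supremum, and in the backward direction, that a single admissible factorization supplies a valid lower bound — together with the bookkeeping of the degenerate case in which $x$ has no factorization at all.
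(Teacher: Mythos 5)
Your proposal is correct and follows essentially the same route as the paper's own proof: reducing $(\chi\circ\mu)(x,q)$ to $\sup_{x=u\gamma v}\mu(v,q)$ via $\chi\equiv 1$, bounding the supremum by $\mu(x,q)$ in the forward direction, using a single factorization as a lower bound in the converse, and treating the no-factorization case separately. No gaps; the right-ideal case is handled symmetrically in both.
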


\begin{proof}
Let $\mu$ be a $Q$-fuzzy left ideal of $S$. Let $a\in S,q\in Q.$ Suppose there exist
$u,v\in S$ and $\delta\in\Gamma$ such that $a=u\delta v.$ Then, since $\mu$ is
a $Q$-fuzzy left ideal of $S,$ we have%
\begin{align*}
(\chi\circ\mu)(a,q)  &  =\underset{a=x\gamma y}{\sup}[\min\{\chi(x,q),\mu(y,q)\}]\\
&  =\underset{a=x\gamma y}{\sup}[\min\{1,\mu(y,q)\}]=\underset{a=x\gamma y}%
{\sup}{\mu(y,q)}.
\end{align*}
Now, since $\mu$ is a $Q$-fuzzy left ideal, $\mu(x\gamma y,q)\geq\mu(y,q)$ for all
$x,y\in S,q\in Q$ and for all $\gamma\in\Gamma.$ So in particular, $\mu(y,q)\leq\mu(a,q)$
for all $a=x\gamma y$. Hence $\underset{a=x\gamma y}{\sup}{\mu(y,q)}\leq\mu(a,q).$
Thus $\mu(a,q)\geq(\chi\circ\mu)(a,q)$. If there do not exist $x,y\in S,\gamma
\in\Gamma$ such that $a=x\gamma y$ then $(\chi\circ\mu)(a,q)=0\leq\mu(a,q).$ Hence
$\chi\circ\mu\subseteq\mu.$ By a similar argument we can show that $\mu
\circ\chi\subseteq\mu$ when $\mu$ is a $Q$-fuzzy right ideal of $S.$

Conversely, let $\chi\circ\mu\subseteq\mu.$ Let $x,y\in S,q\in Q,$ $\gamma\in\Gamma$
and $a:=x\gamma y.$ Then $\mu(x\gamma y,q)=\mu(a,q)\geq(\chi\circ\mu)(a,q).$ Now%
\begin{align*}
(\chi\circ\mu)(a,q)  &  =\underset{a=u\alpha v}{\sup}[\min\{\chi(u,q),\mu
(v,q)\}]\geq\min\{\chi(x,q),\mu(y,q)\}\\
&  =\min\{1,\mu(y,q)\}=\mu(y,q).
\end{align*}
Hence $\mu(x\gamma y,q)\geq\mu(y,q)$. Hence $\mu$ is a $Q$-fuzzy left ideal of $S.$ By
a similar argument we can show that if $\mu\circ\chi\subseteq\mu,$ then $\mu$
is a $Q$-fuzzy right ideal of $S$.\newline
\end{proof}

Using the above theorem we can deduce the following theorem.

\begin{theorem}
Let $S$ be a $\Gamma$-semigroup and $Q$ be any non-empty set. Then following are equivalent: $(1)$ $\mu$ is a
$Q$-fuzzy two-sided ideal of $S,$ \ \ $(2)$ $\chi\circ\mu\subseteq\mu$ and
$\mu\circ\chi\subseteq\mu,$ where $\chi$ is the characteristic function of
$S\times Q.$
\end{theorem}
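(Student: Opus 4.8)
The plan is to derive this statement directly from the preceding characterization theorem together with the defining property of a two-sided ideal, exactly as the sentence ``Using the above theorem we can deduce the following theorem'' suggests. Recall that, by definition, a $Q$-fuzzy subset $\mu$ of $S$ is a $Q$-fuzzy two-sided ideal precisely when it is simultaneously a $Q$-fuzzy left ideal and a $Q$-fuzzy right ideal. Hence the entire argument amounts to splitting the biconditional into its left-ideal and right-ideal halves and invoking the previous theorem on each half. No fresh computation with the composition $\circ$ is needed, since all of that work has already been done.

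First I would assume $(1)$, that $\mu$ is a $Q$-fuzzy two-sided ideal. Then $\mu$ is in particular a $Q$-fuzzy left ideal, so the preceding theorem gives $\chi\circ\mu\subseteq\mu$; likewise $\mu$ is a $Q$-fuzzy right ideal, so that same theorem gives $\mu\circ\chi\subseteq\mu$. Together these yield $(2)$. Conversely, assuming $(2)$, the inclusion $\chi\circ\mu\subseteq\mu$ forces $\mu$ to be a $Q$-fuzzy left ideal and the inclusion $\mu\circ\chi\subseteq\mu$ forces $\mu$ to be a $Q$-fuzzy right ideal, again by the preceding theorem applied in its two directions. Being both a left and a right ideal, $\mu$ is a $Q$-fuzzy two-sided ideal, which is $(1)$.

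There is essentially no obstacle to overcome here, because the genuine content, namely the translation between the defining inequalities $\mu(x\gamma y,q)\geq\mu(y,q)$ and $\mu(x\gamma y,q)\geq\mu(x,q)$ and the two composition inclusions, was already established in the preceding theorem. The present result is simply the conjunction of the two one-sided equivalences. The one point worth noting is that the characteristic function $\chi$ appearing in both inclusions is the same object, the characteristic function of $S\times Q$, so the left-hand and right-hand characterizations are compatible and can be imposed at once without any interaction term, and the proof is complete.
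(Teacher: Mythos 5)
Your proposal is correct and is exactly the argument the paper intends: the paper gives no separate proof for this theorem, stating only that it follows from the preceding one-sided characterization, and your splitting of the two-sided condition into the left and right halves and applying that theorem in both directions is precisely that deduction.
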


\begin{proposition}
Let $Q$ be any non-empty set, $\mu_{1}$ be a $Q$-fuzzy right ideal and $\mu_{2}$ be a $Q$-fuzzy left ideal of a
$\Gamma$-semigroup $S$. Then $\mu_{1}\circ\mu_{2}\subseteq\mu_{1}\cap\mu_{2}.$
\end{proposition}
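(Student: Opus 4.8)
The plan is to verify the inclusion pointwise, showing that $(\mu_{1}\circ\mu_{2})(x,q)\leq\min\{\mu_{1}(x,q),\mu_{2}(x,q)\}=(\mu_{1}\cap\mu_{2})(x,q)$ for every $x\in S$ and every $q\in Q$. First I would dispose of the trivial case: if $x$ admits no factorization $x=u\gamma v$ with $u,v\in S$ and $\gamma\in\Gamma$, then by the definition of the product $(\mu_{1}\circ\mu_{2})(x,q)=0$, and since every fuzzy membership value lies in $[0,1]$, the inequality holds automatically.

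For the substantive case, suppose $x$ does factor. Then $(\mu_{1}\circ\mu_{2})(x,q)$ is the supremum of $\min\{\mu_{1}(u,q),\mu_{2}(v,q)\}$ taken over all factorizations $x=u\gamma v$. The key observation is that the ideal hypotheses bound each factor's value above by the value at $x$ itself: since $\mu_{1}$ is a $Q$-fuzzy right ideal, $\mu_{1}(u\gamma v,q)\geq\mu_{1}(u,q)$, that is $\mu_{1}(x,q)\geq\mu_{1}(u,q)$; and since $\mu_{2}$ is a $Q$-fuzzy left ideal, $\mu_{2}(u\gamma v,q)\geq\mu_{2}(v,q)$, that is $\mu_{2}(x,q)\geq\mu_{2}(v,q)$.

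Combining these two inequalities through the monotonicity of $\min$ yields $\min\{\mu_{1}(u,q),\mu_{2}(v,q)\}\leq\min\{\mu_{1}(x,q),\mu_{2}(x,q)\}$ for every such factorization. Since the right-hand side is a fixed quantity independent of the particular choice of $u,v,\gamma$, it serves as a uniform upper bound, so passing to the supremum over all factorizations preserves the inequality and gives $(\mu_{1}\circ\mu_{2})(x,q)\leq\min\{\mu_{1}(x,q),\mu_{2}(x,q)\}$, which is precisely the asserted inclusion. I do not expect a genuine obstacle here; the only point demanding a little care is confirming that $\min\{\mu_{1}(x,q),\mu_{2}(x,q)\}$ is indeed independent of the factorization, so that the final supremum step is legitimate rather than requiring an interchange of suprema.
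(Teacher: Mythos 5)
Your proof is correct and follows essentially the same route as the paper: bound $\mu_{1}(u,q)$ by $\mu_{1}(u\gamma v,q)=\mu_{1}(x,q)$ via the right-ideal property, bound $\mu_{2}(v,q)$ by $\mu_{2}(x,q)$ via the left-ideal property, pass to the supremum over factorizations, and handle the no-factorization case separately via the value $0$. Your closing remark about the bound being uniform over factorizations is a point the paper leaves implicit, but there is no substantive difference.
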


\begin{proof}
Let $\mu_{1}$ be a $Q$-fuzzy right ideal and $\mu_{2}$ be a $Q$-fuzzy left ideal of a
$\Gamma$-semigroup $S.$ Let $x\in S$ and $q\in Q.$ Suppose there exist $u_{1},v_{1}\in S$
and $\gamma_{1}\in\Gamma$ such that $x=u_{1}\gamma_{1}v_{1}$. Then%
\begin{align*}
(\mu_{1}\circ\mu_{2})(x,q)  &  =\underset{x=u\gamma v}{\sup}\min\{\mu_{1}%
(u,q),\mu_{2}(v,q)\}\\
&  \leq\underset{x=u\gamma v}{\sup}\min\{\mu_{1}(u\gamma v,q),\mu_{2}(u\gamma
v,q)\}\\
&  =\min\{\mu_{1}(x,q),\mu_{2}(x,q)\}=(\mu_{1}\cap\mu_{2})(x,q).
\end{align*}
Suppose there do not exist $u,v\in S$ such that $x=u\gamma v$. Then $(\mu
_{1}\circ\mu_{2})(x,q)=0\leq(\mu_{1}\cap\mu_{2})(x,q).$ Thus $\mu_{1}\circ\mu
_{2}\subseteq\mu_{1}\cap\mu_{2}.$\newline
\end{proof}

From the above proposition and the definition of $\mu_{1}\cap\mu_{2}$ the
following proposition follows easily.

\begin{proposition}
Let $Q$ be a non-empty set and $\mu_{1},\mu_{2}\in QFI(S)$. Then $\mu_{1}\circ\mu_{2}\subseteq\mu_{1}%
\cap\mu_{2}\subseteq\mu_{1},\mu_{2}.$
\end{proposition}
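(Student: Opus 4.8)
The plan is to obtain the two stated inclusions separately: the first by a direct appeal to the preceding proposition, and the second by unwinding the definition of the intersection of two $Q$-fuzzy subsets. The key observation is that a $Q$-fuzzy ideal is, by definition, simultaneously a $Q$-fuzzy left ideal and a $Q$-fuzzy right ideal of $S$. Hence, given $\mu_{1},\mu_{2}\in QFI(S)$, I may regard $\mu_{1}$ as a $Q$-fuzzy right ideal and $\mu_{2}$ as a $Q$-fuzzy left ideal, which is precisely the configuration required to apply the earlier result.

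First I would invoke the preceding proposition under this reading: since $\mu_{1}$ is a $Q$-fuzzy right ideal and $\mu_{2}$ is a $Q$-fuzzy left ideal, that proposition yields at once $\mu_{1}\circ\mu_{2}\subseteq\mu_{1}\cap\mu_{2}$. This disposes of the first inclusion without any further computation.

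For the second inclusion I would argue pointwise. Recalling that $(\mu_{1}\cap\mu_{2})(x,q)=\min\{\mu_{1}(x,q),\mu_{2}(x,q)\}$ and that containment $\subseteq$ of $Q$-fuzzy subsets means pointwise domination of the membership values, for every $x\in S$ and $q\in Q$ one has $\min\{\mu_{1}(x,q),\mu_{2}(x,q)\}\leq\mu_{1}(x,q)$ and $\min\{\mu_{1}(x,q),\mu_{2}(x,q)\}\leq\mu_{2}(x,q)$. These two inequalities give $\mu_{1}\cap\mu_{2}\subseteq\mu_{1}$ and $\mu_{1}\cap\mu_{2}\subseteq\mu_{2}$ respectively, which is exactly the assertion $\mu_{1}\cap\mu_{2}\subseteq\mu_{1},\mu_{2}$. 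Chaining the two parts then yields $\mu_{1}\circ\mu_{2}\subseteq\mu_{1}\cap\mu_{2}\subseteq\mu_{1},\mu_{2}$, as required.

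There is essentially no serious obstacle here: the result is a corollary of the preceding proposition together with the elementary fact that the minimum of two real numbers does not exceed either of them. The only point that requires any care is the reinterpretation step, namely noting that a two-sided $Q$-fuzzy ideal satisfies both the left and the right defining inequalities, so that the hypotheses of the earlier proposition (a right ideal in the first slot and a left ideal in the second) are indeed met. Once that is observed, both inclusions follow immediately.
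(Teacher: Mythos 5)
Your proof is correct and follows exactly the route the paper intends: the paper gives no explicit proof, remarking only that the result ``follows easily'' from the preceding proposition and the definition of $\mu_{1}\cap\mu_{2}$, which is precisely your argument. You have merely spelled out the two steps the paper leaves implicit --- viewing the two-sided ideals as a right ideal in the first slot and a left ideal in the second so that Proposition 4.4 applies, and noting that the pointwise minimum is dominated by each of its arguments.
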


\begin{proposition}
Let $S$ be a regular $\Gamma$-semigroup, $Q$ be any non-empty set and $\mu_{1},\mu_{2}$ be two $Q$-fuzzy
subsets of $S$. Then $\mu_{1}\circ\mu_{2}\supseteq\mu_{1}\cap\mu_{2}$.
\end{proposition}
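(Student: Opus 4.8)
The plan is to establish the stated inclusion pointwise, i.e. to show that $(\mu_{1}\circ\mu_{2})(x,q)\geq(\mu_{1}\cap\mu_{2})(x,q)$ for every $x\in S$ and every $q\in Q$; the entire argument hinges on the single fact that regularity forces a convenient self-factorization of each element. First I would fix an arbitrary $x\in S$ and $q\in Q$. Since $S$ is regular, the definition of a regular $\Gamma$-semigroup guarantees a $\beta\in\Gamma$ with $x=x\beta x$. In particular $x$ admits at least one expression of the form $u\gamma v$ with $u,v\in S$ and $\gamma\in\Gamma$, so the value $(\mu_{1}\circ\mu_{2})(x,q)$ is governed by the supremum branch of the definition of composition rather than by the degenerate $0$-branch.

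Next I would observe that $(x,\beta,x)$ is itself one admissible factorization of $x$, so it contributes the term $\min\{\mu_{1}(x,q),\mu_{2}(x,q)\}$ to the supremum. Since a supremum dominates each of its terms, I obtain
\begin{align*}
(\mu_{1}\circ\mu_{2})(x,q)=\underset{x=u\gamma v}{\sup}\min\{\mu_{1}(u,q),\mu_{2}(v,q)\}\geq\min\{\mu_{1}(x,q),\mu_{2}(x,q)\}.
\end{align*}
By the definition of $\mu_{1}\cap\mu_{2}$ the right-hand side is precisely $(\mu_{1}\cap\mu_{2})(x,q)$, which yields the desired inequality at the point $(x,q)$. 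As $x$ and $q$ were arbitrary, this establishes $\mu_{1}\circ\mu_{2}\supseteq\mu_{1}\cap\mu_{2}$.

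I do not anticipate a genuine obstacle here: the only substantive ingredient is producing a single good factorization of each element, and regularity hands us $x=x\beta x$ directly. The one point that warrants a word of care is the case distinction built into the definition of $\mu_{1}\circ\mu_{2}$; I must explicitly note that regularity rules out the $0$-branch, so that the displayed supremum is really the operative value and the estimate is not vacuous. It is worth remarking that no properties of $\mu_{1}$ and $\mu_{2}$ beyond their being $Q$-fuzzy subsets are used, which matches the generality of the hypotheses. Combining this proposition with Proposition~4.4, one moreover recovers the expected equality $\mu_{1}\circ\mu_{2}=\mu_{1}\cap\mu_{2}$ in the regular case when $\mu_{1}\in QFRI(S)$ and $\mu_{2}\in QFLI(S)$.
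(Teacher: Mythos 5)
Your proof is correct and follows essentially the same route as the paper: both arguments use regularity to exhibit the self-factorization $x=x\beta x$ (the paper writes $c=c\gamma_{1}x\gamma_{2}c=c\gamma c$ and then takes $\gamma:=\gamma_{1}x\gamma_{2}\in\Gamma$, which amounts to the same thing under Definition~2.7) and then bound the supremum in $(\mu_{1}\circ\mu_{2})(x,q)$ from below by the term $\min\{\mu_{1}(x,q),\mu_{2}(x,q)\}$. Your explicit remark that regularity rules out the $0$-branch of the definition of $\circ$ is a point the paper leaves implicit, but it is the same proof.
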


\begin{proof}
Let $c\in S$ and $q\in Q.$ Since $S$ is regular, then there exists an element $x\in S$ and
$\gamma_{1},\gamma_{2}\in\Gamma$ such that $c=c\gamma_{1}x\gamma_{2}c=c\gamma
c$ where $\gamma:=\gamma_{1}x\gamma_{2}\in\Gamma$. Then%
\begin{align*}
(\mu_{1}\circ\mu_{2})(c,q)  &  =\underset{c=u\alpha v}{\sup}\{\min\{\mu
_{1}(u,q),\mu_{2}(v,q)\}\}\\
&  \geq\min\{\mu_{1}(c,q),\mu_{2}(c,q)\}=(\mu_{1}\cap\mu_{2})(c,q).
\end{align*}
Therefore $\mu_{1}\circ\mu_{2}\supseteq\mu_{1}\cap\mu_{2}$.
\end{proof}

\begin{theorem}
Let $S$ be a $\Gamma$-semigroup and $Q$ be any non-empty set. Then following are equivalent. $(1)$ $S$ is
regular. $(2)$ $\mu_{1}\circ\mu_{2}=\mu_{1}\cap\mu_{2}$ for every $Q$-fuzzy right
ideal $\mu_{1}$ and every $Q$-fuzzy left ideal $\mu_{2}$ of $S.$
\end{theorem}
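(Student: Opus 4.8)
The plan is to prove the equivalence by establishing the two implications separately, drawing on the propositions already proved in this section. The statement characterizes regularity by the condition that $\mu_1 \circ \mu_2 = \mu_1 \cap \mu_2$ for every $Q$-fuzzy right ideal $\mu_1$ and every $Q$-fuzzy left ideal $\mu_2$.

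For the forward direction $(1) \Rightarrow (2)$, suppose $S$ is regular. The key observation is that the two needed inclusions have essentially been supplied already. Proposition (the one giving $\mu_1 \circ \mu_2 \subseteq \mu_1 \cap \mu_2$ for a $Q$-fuzzy right ideal $\mu_1$ and $Q$-fuzzy left ideal $\mu_2$) yields one containment with no regularity hypothesis. The reverse containment $\mu_1 \cap \mu_2 \subseteq \mu_1 \circ \mu_2$ is exactly the content of the proposition proved for regular $\Gamma$-semigroups, which holds for arbitrary $Q$-fuzzy subsets and in particular for a right ideal and a left ideal. Combining the two inclusions gives equality, so the forward direction reduces to citing these two prior results.

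For the converse $(2) \Rightarrow (1)$, I would assume the equality holds for all such pairs and deduce regularity. First I would recall the standard ideal-theoretic characterization: $S$ is regular if and only if $R \cap L = R\Gamma L$ for every right ideal $R$ and every left ideal $L$. The strategy is to translate the $Q$-fuzzy equality into this crisp statement by applying it to characteristic functions. Given a right ideal $R$ and a left ideal $L$, set $\mu_1 = \chi_{R \times Q}$ and $\mu_2 = \chi_{L \times Q}$, which are a $Q$-fuzzy right ideal and a $Q$-fuzzy left ideal by Theorem (the characteristic function criterion). One then checks that $\chi_{R \times Q} \cap \chi_{L \times Q} = \chi_{(R \cap L) \times Q}$ and, from the definition of the composition, that $\chi_{R \times Q} \circ \chi_{L \times Q} = \chi_{(R\Gamma L) \times Q}$. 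The hypothesis forces these characteristic functions equal, hence $R \cap L = R\Gamma L$, and regularity follows.

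The main obstacle is the identification $\chi_{R \times Q} \circ \chi_{L \times Q} = \chi_{(R\Gamma L) \times Q}$. Unwinding the supremum in the definition of $\circ$: for fixed $x$ and $q$, the value $(\chi_{R\times Q} \circ \chi_{L\times Q})(x,q)$ equals $1$ precisely when there is a factorization $x = u\gamma v$ with $\chi_{R\times Q}(u,q) = \chi_{L\times Q}(v,q) = 1$, that is, with $u \in R$ and $v \in L$, which means $x \in R\Gamma L$; otherwise the supremum is $0$. This gives the claimed equality, but care is needed because $R\Gamma L$ need not itself be one of the ideals to which the hypothesis applies. Showing $R \cap L = R\Gamma L$ for all right ideals $R$ and left ideals $L$ does, however, suffice: given any $a \in S$, take $R = a\Gamma S \cup \{a\}$ (the principal right ideal) and $L = S\Gamma a \cup \{a\}$; then $a \in R \cap L = R\Gamma L$, and expanding the latter produces an expression exhibiting $a = a\gamma a$ for some composite $\gamma \in \Gamma$, which is exactly regularity. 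I would carry out this final extraction of the regularity element carefully, as it is the one genuinely nontrivial step.
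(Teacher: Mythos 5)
Your proposal is correct and follows essentially the same route as the paper: the forward direction by combining the two containment propositions, and the converse by applying the hypothesis to the characteristic functions $\chi_{R\times Q}$ and $\chi_{L\times Q}$ to deduce $R\cap L=R\Gamma L$ for all right ideals $R$ and left ideals $L$. Your only departure is that you actually prove the final step---extracting regularity from $R\cap L=R\Gamma L$ via the principal ideals $a\Gamma S\cup\{a\}$ and $S\Gamma a\cup\{a\}$, using the $\Gamma\times S\times\Gamma\rightarrow\Gamma$ operation to absorb $\gamma s\delta$ into $\Gamma$---whereas the paper simply asserts ``consequently $S$ is regular,'' so your write-up is, if anything, more complete on the one genuinely delicate point.
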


\begin{proof}
Let $S$ be a regular $\Gamma$-semigroup. Then by Proposition $4.6$, $\mu
_{1}\circ\mu_{2}\supseteq\mu_{1}\cap\mu_{2}$. Again by Proposition $4.4$,
$\mu_{1}\circ\mu_{2}\subseteq\mu_{1}\cap\mu_{2}.$ Hence $\mu_{1}\circ\mu
_{2}=\mu_{1}\cap\mu_{2}.$

Conversely, let $S$ be a $\Gamma$-semigroup, $q\in Q$ and for every $Q$-fuzzy right ideal
$\mu_{1}$ and every $Q$-fuzzy left ideal $\mu_{2}$ of $S,$ $\mu_{1}\circ\mu
_{2}=\mu_{1}\cap\mu_{2}.$ Let $L$ and $R$ be respectively a left ideal and a
right ideal of $S$ and $x\in R\cap L.$ Then $x\in R$ and $x\in L.$ Hence
$\chi_{L\times Q}(x,q)=\chi_{R\times Q}(x,q)=1($where $\chi_{L\times Q}(x)$ and $\chi_{R\times Q}(x)$ are
respectively the characteristic functions of $L\times Q$ and $R\times Q)$. Thus%
\[
(\chi_{R\times Q}\cap\chi_{L\times Q})(x)=\min\{\chi_{R\times Q}(x),\chi_{L\times Q}(x)\}=1.
\]
Now by Theorem $3.4$, $\chi_{L\times Q}$ and $\chi_{R\times Q}$ are respectively a $Q$-fuzzy
left ideal and a $Q$-fuzzy right ideal of $S$. Hence by hypothesis, $\chi_{R\times Q}%
\circ\chi_{L\times Q}=\chi_{R\times Q}\cap\chi_{L\times Q}$. Hence%
\begin{align*}
(\chi_{R\times Q}\circ\chi_{L\times Q})(x,q)  &  =1\\
i.e.,\underset{x=y\gamma z}{\sup}[\min\{\chi_{R\times Q}(y,q),\chi_{L\times Q}(z,q)\}  &  :y,z\in
S;\gamma\in\Gamma]=1.
\end{align*}

This implies that there exist some $r,s\in S,q\in Q$ and $\gamma_{1}\in\Gamma$ such
that $x=r\gamma_{1}s$ and $\chi_{R\times Q}(r,q)=1=\chi_{L\times Q}(s,q)$. Hence $r\in R$ and
$s\in L.$ Hence $x\in R\Gamma L.$ Thus $R\cap L\subseteq R\Gamma L.$ Also
$R\Gamma L\subseteq R\cap L.$ Hence $R\Gamma L=R\cap L.$ Consequently, the
$\Gamma$-semigroup $S$ is regular.
\end{proof}


\section{Corresponding $Q$-Fuzzy Ideals}

Many results of semigroups could be extended to $\Gamma$-semigroups directly
and via operator semigroups\cite{D1} of a $\Gamma$-semigroup. In
this section in order to make operator semigroups of a $\Gamma$-semigroup
work in the context of $Q$-fuzzy sets as it worked in the study of $\Gamma
$-semigroups$\cite{D1,D2}$, we obtain various
relationships between $Q$-fuzzy ideals of a $\Gamma$-semigroup and that of its
operator semigroups. Here, among other results we obtain an inclusion
preserving bijection between the set of all $Q$-fuzzy ideals of a $\Gamma
$-semigroup and that of its operator semigroups.

\begin{definition}
$\cite{D1}$Let $S$ be a $\Gamma$-semigroup. Let us define a relation
$\rho$ on $S\times\Gamma$ as follows : $(x,\alpha)\rho(y,\beta)$ if and only
if $x\alpha s=y\beta s$ for all $s\in S$ and $\gamma x\alpha=\gamma y\beta$
for all $\gamma\in\Gamma.$ Then $\rho$ is an equivalence relation. Let
$[x,\alpha]$ denote the equivalence class containing $(x,\alpha)$. Let
$L=\{[x,\alpha]:x\in S,\alpha\in\Gamma\}.$ Then $L$ is a semigroup with
respect to the multiplication defined by $[x,\alpha][y,\beta]=[x\alpha
y,\beta].$\textit{ }This semigroup $L$ is called the left operator semigroup
of the $\Gamma$-semigroup $S.$ Dually the right operator semigroup $R$ of
$\Gamma$-semigroup $S$ is defined where the multiplication is defined by
$[\alpha,a][\beta,b]=[\alpha a\beta,b].$

If there exists an element $[e,\delta]\in L([\gamma,f]\in R)$ such that $e\delta s=s($resp. $s\gamma f=s)$ for all $s\in S$ then $[e,\delta]($resp. $[\gamma,f])$ is called the left$($resp. right$)$ unity\index{unity(left, right)} of $S.$
\end{definition}

\begin{definition}
Let $Q$ be any non-empty set. For a $Q$-fuzzy subset $\mu$ of $R$ we define a $Q$-fuzzy subset $\mu^{\ast}$ of $S$
by $\mu^{\ast}(a,q)=\underset{\gamma\in\Gamma}{\inf}$ $\mu([\gamma,a],q),$ where
$a\in S,q\in Q.$ For a $Q$-fuzzy subset $\sigma$ of $S$ we define a $Q$-fuzzy subset
$\sigma^{\ast^{^{\prime}}}$ of $R$ by $\sigma^{\ast^{^{\prime}}}%
([\alpha,a],q)=\underset{s\in S}{\inf}$ $\sigma(s\alpha a,q),$ where
$[\alpha,a]\in R,q\in Q.$ For a $Q$-fuzzy subset $\delta$ of $L,$ we define a $Q$-fuzzy
subset $\delta^{+}$ of $S$ by $\delta^{+}(a,q)=\underset{\gamma\in\Gamma}{\inf}$
$\delta([a,\gamma],q)$ where $a\in S,q\in Q$. For a $Q$-fuzzy subset $\eta$\textit{ }of $S$
we define a $Q$-fuzzy subset $\eta^{+^{^{\prime}}}$ of $L$ by $\eta^{+^{^{\prime}%
}}([a,\alpha],q)=\underset{s\in S}{\inf}$ $\eta(a\alpha s,q),$ where
$[a,\alpha]\in L,q\in Q.$
\end{definition}

Now we recall the following propositions from $\cite{D1}$ which were
proved therein for one sided ideals. But the results can be proved to be true
for two sided ideals.\newline

\begin{proposition}
$\cite{D1}$Let $S$ be a $\Gamma$-semigroup with unities and $L$ be
its left operator semigroup. If $A$ is a $($right$)$ ideal of $L$ then $A^{+}$
is a $($right$)$ideal of $S.$
\end{proposition}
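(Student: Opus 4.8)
The plan is to work with the crisp analogue of the $+$ operation introduced just above for $Q$-fuzzy sets: for a subset $A$ of $L$ the correct reading of $A^{+}$ is
$A^{+}=\{a\in S:[a,\gamma]\in A\text{ for all }\gamma\in\Gamma\}$.
This is exactly what the map $\delta\mapsto\delta^{+}$ produces from the characteristic function of $A$, since $\underset{\gamma\in\Gamma}{\inf}\,\chi_{A}([a,\gamma],q)=1$ holds precisely when every bracket $[a,\gamma]$ lies in $A$. The one computational tool I will use throughout is the multiplication of the left operator semigroup, $[x,\alpha][y,\beta]=[x\alpha y,\beta]$.

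First I would settle non-emptiness. Since $A$ is an ideal it is non-empty, so I fix $[x,\alpha]\in A$ and any $s\in S$ (possible as $S\neq\emptyset$). For every $\gamma\in\Gamma$ we have $[x,\alpha][s,\gamma]=[x\alpha s,\gamma]$, and this lies in $A$ because $A$ is a right ideal of $L$ and $[s,\gamma]\in L$. As $\gamma$ was arbitrary, every bracket $[x\alpha s,\gamma]$ belongs to $A$, whence $x\alpha s\in A^{+}$; in particular $A^{+}\neq\emptyset$.

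Next, for the right-ideal property I take $a\in A^{+}$, $\beta\in\Gamma$ and $s\in S$ and aim at $a\beta s\in A^{+}$, i.e. $[a\beta s,\gamma]\in A$ for every $\gamma$. Fixing $\gamma$ and writing $[a\beta s,\gamma]=[a,\beta][s,\gamma]$, I note that $[a,\beta]\in A$ because $a\in A^{+}$ forces $[a,\gamma']\in A$ for all $\gamma'$, in particular for $\gamma'=\beta$, while $[s,\gamma]\in L$. Since $A$ is a right ideal the product lies in $A$, so $[a\beta s,\gamma]\in A$ for every $\gamma$ and thus $a\beta s\in A^{+}$, giving $A^{+}\Gamma S\subseteq A^{+}$. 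This handles the parenthetical ``right ideal'' case. For the two-sided statement I would run the mirror-image computation: with $a\in A^{+}$, $\beta\in\Gamma$, $s\in S$ I write $[s\beta a,\gamma]=[s,\beta][a,\gamma]$, use $[a,\gamma]\in A$ together with the left-ideal hypothesis $LA\subseteq A$ to land the product in $A$ for every $\gamma$, and conclude $s\beta a\in A^{+}$, i.e. $S\Gamma A^{+}\subseteq A^{+}$; combined with the right-ideal part this makes $A^{+}$ a (two-sided) ideal of $S$.

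The proof is short because the real work is carried by the identity $[a,\beta][s,\gamma]=[a\beta s,\gamma]$, which converts a $\Gamma$-product in $S$ into a semigroup product in $L$, and because the ``for all $\gamma$'' quantifier built into $A^{+}$ is matched precisely by letting the second coordinate of $[s,\gamma]$ range over $\Gamma$. The only point demanding care is the bookkeeping of that quantifier: one must verify membership in $A$ for \emph{every} $\gamma$ separately before concluding membership in $A^{+}$, rather than for a single $\gamma$. The hypothesis that $S$ carries unities is the standing assumption under which the full correspondence of this section (culminating in the inclusion-preserving bijection) is built; for this one-directional claim the ideal structure of $A^{+}$ is already forced by the multiplication of $L$, and the unities are the tool that will be needed later when the construction is inverted.
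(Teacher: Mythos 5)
Your proof is correct. The paper itself offers no proof of this proposition --- it is recalled from Dutta and Adhikari \cite{D1} --- but your argument is exactly the standard direct verification one would expect: you use the crisp definition $A^{+}=\{s\in S:[s,\gamma]\in A\ \forall\gamma\in\Gamma\}$ that the paper records just before Proposition 5.7, together with the identity $[a,\beta][s,\gamma]=[a\beta s,\gamma]$, and you correctly handle both the non-emptiness requirement and the ``for all $\gamma$'' quantifier. Your closing observation is also accurate: the standing hypothesis that $S$ has unities is not used in this one-directional claim and only becomes essential when the correspondence is inverted in Theorem 5.13.
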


\begin{proposition}
$\cite{D1}$Let $S$ be a $\Gamma$-semigroup with unities and $L$ be
its left operator semigroup. If $B$ is a $($right$)$ideal of $S$ then
$B^{+^{^{\prime}}}$ is a$($right$)$ideal of $L.$
\end{proposition}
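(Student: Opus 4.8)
The plan is to work at the level of characteristic functions, or equivalently to unwind the definition of $(\cdot)^{+^{^{\prime}}}$ for an ordinary subset. Reading Definition $5.2$ through the characteristic function of $B$, the set $B^{+^{^{\prime}}}$ is exactly
\[
B^{+^{^{\prime}}}=\{[a,\alpha]\in L:a\alpha s\in B\text{ for all }s\in S\}.
\]
First I would check that this prescription is well defined on $L$, i.e. that it respects the defining relation $\rho$ of Definition $5.1$: if $(a,\alpha)\rho(a^{\prime},\alpha^{\prime})$ then $a\alpha s=a^{\prime}\alpha^{\prime}s$ for every $s\in S$, so $a\alpha s\in B$ for all $s$ iff $a^{\prime}\alpha^{\prime}s\in B$ for all $s$; hence membership in $B^{+^{^{\prime}}}$ does not depend on the chosen representative.

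Next I would dispose of non-emptiness. Since $\Gamma$ is non-empty, fix $\gamma\in\Gamma$, and since $B$ is an ideal (in particular a right ideal) pick $b\in B$; then $b\gamma s\in B\Gamma S\subseteq B$ for every $s\in S$, so $[b,\gamma]\in B^{+^{^{\prime}}}$ and $B^{+^{^{\prime}}}\neq\phi$. (The standing hypothesis that $S$ has unities is what makes the operator-semigroup correspondence well behaved; for this particular implication non-emptiness is already forced by $B$ being an ideal.)

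The heart of the argument is closure under multiplication by $L$, and here the only tool is the associative law of Definition $2.2$. For the right-ideal statement, take $[a,\alpha]\in B^{+^{^{\prime}}}$ and an arbitrary $[b,\beta]\in L$. Using the product rule $[a,\alpha][b,\beta]=[a\alpha b,\beta]$ of Definition $5.1$, I must show $[a\alpha b,\beta]\in B^{+^{^{\prime}}}$, that is $(a\alpha b)\beta s\in B$ for all $s$. Associativity gives $(a\alpha b)\beta s=a\alpha(b\beta s)$, and since $b\beta s\in S$ while $a\alpha t\in B$ for every $t\in S$, the choice $t=b\beta s$ yields $a\alpha(b\beta s)\in B$. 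Hence $B^{+^{^{\prime}}}L\subseteq B^{+^{^{\prime}}}$. For the two-sided statement I would in addition verify $LB^{+^{^{\prime}}}\subseteq B^{+^{^{\prime}}}$: for $[b,\beta]\in L$ and $[a,\alpha]\in B^{+^{^{\prime}}}$ one has $[b,\beta][a,\alpha]=[b\beta a,\alpha]$, and $(b\beta a)\alpha s=b\beta(a\alpha s)$ lies in $S\Gamma B\subseteq B$ because $a\alpha s\in B$ and $B$ is a left ideal.

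I do not expect a serious obstacle here: the computation is a one-line application of associativity once the correct set-theoretic reading of $(\cdot)^{+^{^{\prime}}}$ is in hand. The only points demanding care are the representative-independence check in the first step and keeping the bracketing of the ternary products straight, so that each associativity rewrite matches the form $[x,\alpha][y,\beta]=[x\alpha y,\beta]$ used in $L$.
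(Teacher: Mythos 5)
Your proof is correct and complete. Note that the paper itself supplies no proof of this proposition: it is recalled verbatim from Dutta and Adhikari \cite{D1}, with only the set-theoretic description $B^{+^{\prime}}=\{[x,\alpha]\in L:x\alpha s\in B\ \forall s\in S\}$ recorded in the text following Proposition $5.6$, so the relevant comparison is with the standard argument, and that is exactly what you give. Your representative-independence check (which correctly needs only the first defining condition of $\rho$), the non-emptiness step via $b\gamma s\in B\Gamma S\subseteq B$, and the two associativity rewrites $(a\alpha b)\beta s=a\alpha(b\beta s)$ and $(b\beta a)\alpha s=b\beta(a\alpha s)$ handle the right-ideal and two-sided cases respectively; your side remark that the unities hypothesis is not actually used in this direction is also accurate --- it is needed elsewhere, e.g.\ for the reverse correspondence $(\sigma^{+^{\prime}})^{+}=\sigma$ in Theorem $5.13$.
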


\begin{proposition}
$\cite{D1}$Let $S$ be a $\Gamma$-semigroup with unities and $R$ be
its right operator semigroup. If $A$ is a $($left$)$ideal of $R$ then
$A^{\ast}$ is a $($left$)$ideal of $S.$
\end{proposition}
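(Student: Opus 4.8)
The plan is to unwind the definition of the $\ast$-operation for a \emph{crisp} subset of $R$, which is the natural specialization of Definition 5.2 to a characteristic function, namely $A^{\ast}=\{a\in S:[\alpha,a]\in A\ \text{for all}\ \alpha\in\Gamma\}$, and then to verify the two requirements for $A^{\ast}$ to be a left ideal of $S$: that it absorbs $S\Gamma$ on the left, and that it is non-empty. Everything reduces to a single rebracketing identity inside $R$, so I would isolate that first.

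The key identity: for arbitrary $s\in S$, $\gamma,\delta\in\Gamma$ and $a\in S$ I would show $[\gamma,s\delta a]=[\gamma,s][\delta,a]=[\gamma s\delta,a]$ in $R$. The middle equality is just the multiplication rule $[\alpha,x][\beta,y]=[\alpha x\beta,y]$ of the right operator semigroup. The outer equality is the substantive point: I would check the two defining conditions of the (dual) equivalence that builds $R$, namely $t(\gamma s\delta)a=t\gamma(s\delta a)$ for all $t\in S$ and $(\gamma s\delta)a\omega=\gamma(s\delta a)\omega$ for all $\omega\in\Gamma$. Both are instances of the mixed associative laws of Definition 2.4, which say precisely that the alternating product $\gamma\,s\,\delta\,a$ (and its bordered versions $t\gamma s\delta a$ and $\gamma s\delta a\omega$) is independent of bracketing whether it is read as an element of $S$ or of $\Gamma$. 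This is the step I expect to be the only real obstacle; once it is in hand the rest is mechanical.

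Absorption: let $a\in A^{\ast}$, $s\in S$, $\delta\in\Gamma$; I must show $s\delta a\in A^{\ast}$, i.e. $[\gamma,s\delta a]\in A$ for every $\gamma\in\Gamma$. Fix $\gamma$. Since $a\in A^{\ast}$ we have $[\delta,a]\in A$, while $[\gamma,s]\in R$; because $A$ is a left ideal of $R$, $[\gamma,s][\delta,a]\in A$. By the key identity this element equals $[\gamma,s\delta a]$, so $[\gamma,s\delta a]\in A$. As $\gamma$ was arbitrary, $s\delta a\in A^{\ast}$, giving $S\Gamma A^{\ast}\subseteq A^{\ast}$.

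Non-emptiness and the two-sided case: since $A\neq\emptyset$, choose $[\beta,b]\in A$ and any $x_{0}\in S$; then for every $\gamma\in\Gamma$ the key identity gives $[\gamma,x_{0}\beta b]=[\gamma,x_{0}][\beta,b]\in A$, again by the left-ideal property, so $x_{0}\beta b\in A^{\ast}$ and $A^{\ast}\neq\emptyset$. Hence $A^{\ast}$ is a left ideal of $S$. For the two-sided version I would argue symmetrically: if $A$ is moreover a right ideal of $R$, then for $a\in A^{\ast}$, $s\in S$, $\delta\in\Gamma$ the companion identity $[\gamma,a\delta s]=[\gamma,a][\delta,s]$, together with $[\gamma,a]\in A$ and $AR\subseteq A$, yields $a\delta s\in A^{\ast}$, so $A^{\ast}\Gamma S\subseteq A^{\ast}$ as well. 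I note that the unity hypothesis is not actually required for this implication; it is carried over from \cite{D1} because it is needed for the companion propositions and for the inclusion-preserving bijection, where one must invert $\ast$ by the operation $\ast^{\prime}$.
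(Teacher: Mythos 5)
Your proof is correct and complete. Note that the paper itself supplies no proof of this proposition---it is recalled from Dutta and Adhikari \cite{D1} as background---so there is nothing internal to compare against; your argument is exactly the standard one: everything does reduce to the rebracketing identity $[\gamma,s\delta a]=[\gamma s\delta,a]=[\gamma,s][\delta,a]$, which you rightly verify against both defining conditions of the dual equivalence ($t\gamma(s\delta a)=t(\gamma s\delta)a$ for all $t\in S$ and $\gamma(s\delta a)\omega=(\gamma s\delta)a\omega$ for all $\omega\in\Gamma$, both instances of the mixed associative laws of Definition 2.4), after which absorption and non-emptiness of $A^{\ast}$ follow mechanically, including your non-emptiness witness $x_{0}\beta b$. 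Your closing remark is also accurate: the unity hypothesis is idle in this implication and is genuinely used only later, e.g.\ when $(\ )^{\ast}$ is inverted by $(\ )^{\ast^{\prime}}$ in Theorem 5.14.
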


\begin{proposition}
$\cite{D1}$Let $S$ be a $\Gamma$-semigroup with unities and $R$ be
its right operator semigroup. If $B$ is a $($left$)$ideal of $S$ then
$B^{\ast^{^{\prime}}}$ is a $($left$)$ideal of $R.$
\end{proposition}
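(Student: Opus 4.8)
The plan is to begin by translating the construction $B^{\ast^{\prime}}$, which Definition 5.2 records only for $Q$-fuzzy subsets, into its ordinary-set form by specializing to the characteristic function $\chi_{B\times Q}$. Since $(\chi_{B\times Q})^{\ast^{\prime}}([\alpha,a],q)=\underset{s\in S}{\inf}\,\chi_{B\times Q}(s\alpha a,q)$ takes the value $1$ precisely when $s\alpha a\in B$ for every $s\in S$, the crisp object corresponding to $\sigma\mapsto\sigma^{\ast^{\prime}}$ is
\[
B^{\ast^{\prime}}=\{[\alpha,a]\in R:\ s\alpha a\in B\ \text{for all}\ s\in S\}.
\]
Before exploiting this I would verify that the defining condition does not depend on the representative $(\alpha,a)$ of the class: by the (dual of the) relation $\rho$ in Definition 5.1, $(\alpha,a)\rho(\beta,b)$ forces $s\alpha a=s\beta b$ for all $s\in S$, so membership in $B^{\ast^{\prime}}$ is a property of the class $[\alpha,a]$ and hence $B^{\ast^{\prime}}$ is a well-defined subset of $R$.

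The engine of the argument is the compatibility between the multiplication in $R$ and the expression $s\alpha a$, namely the identities $s(\beta b\alpha)a=(s\beta b)\alpha a$ and $s(\alpha a\beta)b=(s\alpha a)\beta b$. Granting these, the left-ideal case is immediate: taking $[\alpha,a]\in B^{\ast^{\prime}}$ and an arbitrary $[\beta,b]\in R$, the product is $[\beta,b][\alpha,a]=[\beta b\alpha,a]$, and for each $s\in S$ we have $s(\beta b\alpha)a=(s\beta b)\alpha a\in B$ because $s\beta b\in S$ and $[\alpha,a]\in B^{\ast^{\prime}}$; hence $[\beta b\alpha,a]\in B^{\ast^{\prime}}$. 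This step never uses that $B$ is an ideal. The hypothesis enters only to guarantee $B^{\ast^{\prime}}\neq\emptyset$: for any $b\in B$ and $\alpha\in\Gamma$ one has $s\alpha b\in S\Gamma B\subseteq B$ for all $s$, so $[\alpha,b]\in B^{\ast^{\prime}}$.

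For the two-sided statement I would additionally establish right-ideal closure. With $[\alpha,a]\in B^{\ast^{\prime}}$ and $[\beta,b]\in R$, the product is $[\alpha,a][\beta,b]=[\alpha a\beta,b]$, and for each $s\in S$ the second identity gives $s(\alpha a\beta)b=(s\alpha a)\beta b$; here $s\alpha a\in B$, and now the hypothesis that $B$ is a right ideal yields $(s\alpha a)\beta b\in B\Gamma S\subseteq B$, so $[\alpha a\beta,b]\in B^{\ast^{\prime}}$. Together with the (automatic) left-ideal closure and nonemptiness, this shows $B^{\ast^{\prime}}$ is an ideal of $R$ whenever $B$ is an ideal of $S$.

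The portion demanding the most care is really the crisp-level translation together with well-definedness on $\rho$-classes, rather than the closure computations. The two reassociation identities, though they look like the crux, are just instances of the mixed associative laws of Definition 2.3 applied to the long products $s\beta b\alpha a$ and $s\alpha a\beta b$, and present no genuine difficulty once one reads off the correct clause. I would flag the mildly surprising point that the left-ideal closure of $B^{\ast^{\prime}}$ holds for \emph{every} subset $B$, so the ideal hypothesis is consumed only in proving $B^{\ast^{\prime}}\neq\emptyset$ (and, in the two-sided case, in right-ideal closure). Finally, the standing unity assumption does not appear to be needed for this direction; it is the companion map $A\mapsto A^{\ast}$ and the resulting inclusion-preserving bijection that genuinely rely on it.
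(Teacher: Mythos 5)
Your proof is correct. The paper itself gives no proof of this proposition --- it is recalled verbatim from Dutta and Adhikari \cite{D1}, with only the remark that the one-sided results proved there extend to two-sided ideals --- so there is no in-paper argument to compare against; your direct verification (the crisp translation of $()^{\ast^{\prime}}$ via characteristic functions, which matches the explicit set-theoretic description $P^{\ast^{\prime}}=\{[\alpha,x]\in R: s\alpha x\in P\ \forall s\in S\}$ the paper records, well-definedness on $\rho$-classes, automatic left-closure, nonemptiness from $S\Gamma B\subseteq B$, and the right-closure step that alone consumes the right-ideal hypothesis) is precisely the standard argument, and your side observations that left-closure holds for arbitrary subsets $B$ and that the unity assumption is not needed for this direction are both accurate.
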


For convenience of the readers, we may note that for a $\Gamma$-semigroup $S$
and its left, right operator semigroups $L,R$ respectively four mappings
namely $()^{+},$ $()^{+^{^{\prime}}},()^{+},()^{\ast^{^{\prime}}}$ occur. They
are defined as follows: For $I\subseteq R,I^{\ast}=\{s\in S,[\alpha,s]\in
I\forall\alpha\in\Gamma\}$; for $P\subseteq S,P^{\ast^{^{\prime}}}%
=\{[\alpha,x]\in R:s\alpha x\in P\forall s\in S\}$; for $J\subseteq
L,J^{+}=\{s\in S,[s,\alpha]\in J\forall\alpha\in\Gamma\}$; for $Q\subseteq
S,Q^{+^{^{\prime}}}=\{[x,\alpha]\in L:x\alpha s\in Q\forall s\in S\}.$

\begin{proposition}
Let $Q$ be any non-empty set and $\mu$ is a $Q$-fuzzy subset of $R($the right operator semigroup of the $\Gamma
$-semigroup $S).$ Then $(\mu_{t})^{\ast}=(\mu^{\ast})_{t}$ for all
$t\in\lbrack0,1]$ such that the sets are non-empty.
\end{proposition}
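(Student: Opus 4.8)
The plan is to unwind both sides of the claimed equality directly from the definitions and reduce everything to a single elementary property of infima. First I would write down explicitly what membership in each set means. By the definition of the $t$-level subset of a $Q$-fuzzy subset of $R$ together with the set-theoretic description $I^{\ast}=\{s\in S:[\alpha,s]\in I\ \forall\alpha\in\Gamma\}$, an element $s\in S$ lies in $(\mu_{t})^{\ast}$ precisely when $[\alpha,s]\in\mu_{t}$ for every $\alpha\in\Gamma$, that is, when $\mu([\alpha,s],q)\geq t$ for every $\alpha\in\Gamma$ and every $q\in Q$. On the other hand, by the definition of $\mu^{\ast}$, namely $\mu^{\ast}(a,q)=\inf_{\gamma\in\Gamma}\mu([\gamma,a],q)$, together with the definition of the $t$-level subset of a $Q$-fuzzy subset of $S$, an element $s\in S$ lies in $(\mu^{\ast})_{t}$ precisely when $\inf_{\gamma\in\Gamma}\mu([\gamma,s],q)\geq t$ for every $q\in Q$.

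Next I would invoke the standard fact that, for a fixed $q$ and a family of numbers $\mu([\gamma,s],q)\in[0,1]$ indexed by $\gamma\in\Gamma$, one has $\inf_{\gamma\in\Gamma}\mu([\gamma,s],q)\geq t$ if and only if $\mu([\gamma,s],q)\geq t$ for all $\gamma\in\Gamma$. Indeed, the infimum is the greatest lower bound of the family, so it dominates $t$ exactly when $t$ is itself a lower bound, i.e. exactly when every member of the family is at least $t$. Applying this equivalence for each fixed $q\in Q$ and then quantifying over all $q\in Q$, the membership condition defining $(\mu^{\ast})_{t}$ becomes verbatim the membership condition defining $(\mu_{t})^{\ast}$, whence the two sets coincide.

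The argument is essentially a chain of equivalences, so I do not anticipate any genuine obstacle. The only point that requires a little care is the bookkeeping of the two quantifiers ``for all $q\in Q$'' and ``for all $\gamma\in\Gamma$'' relative to the infimum over $\gamma$: the infimum is taken over $\gamma$ for each fixed $q$, so the quantifier over $q$ must remain on the outside, and the passage from the infimum condition to the pointwise-in-$\gamma$ condition is performed inside a fixed $q$. Finally, the hypothesis that the sets be non-empty plays no role in the equivalence itself; it merely excludes the degenerate case and keeps the statement of the proposition clean.
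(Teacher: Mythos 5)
Your proposal is correct and follows essentially the same argument as the paper: both proofs unwind the two membership conditions into a chain of equivalences and reduce the claim to the fact that $\inf_{\gamma\in\Gamma}\mu([\gamma,s],q)\geq t$ holds precisely when $\mu([\gamma,s],q)\geq t$ for every $\gamma\in\Gamma$. If anything, you are slightly more careful than the paper about keeping the quantifier ``for all $q\in Q$'' (which is built into the definition of the level subset of a $Q$-fuzzy set) outside the infimum over $\gamma$, whereas the paper fixes a single $q$ at the outset and leaves that quantification implicit.
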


\begin{proof}
Let $s\in S,q\in Q.$ Then%
\begin{align*}
s  &  \in(\mu_{t})^{\ast}\Leftrightarrow\lbrack\gamma,s]\in\mu_{t}\text{
}\forall\gamma\in\Gamma\Leftrightarrow\mu([\gamma,s],q)\geq t\text{ }%
\forall\gamma\in\Gamma\\
&  \Leftrightarrow\underset{\gamma\in\Gamma}{\inf}\mu([\gamma,s],q)\geq
t\Leftrightarrow\mu^{\ast}(s,q)\geq t\Leftrightarrow s\in(\mu^{\ast})_{t}.
\end{align*}
Hence $(\mu_{t})^{\ast}=(\mu^{\ast})_{t}.$\newline
\end{proof}

\begin{proposition}
Let $Q$ be any non-empty set and $\sigma$ is a $Q$-fuzzy subset of a $\Gamma$-semigroup $S$. Then $(\sigma
_{t})^{\ast^{^{\prime}}}=(\sigma^{\ast^{^{\prime}}})_{t}$ for all $t\in
\lbrack0,1]$ such that the sets under consideration are non-empty.
\end{proposition}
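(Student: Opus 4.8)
The plan is to imitate the biconditional-chain argument just used for the $R$-operator $()^{\ast}$, replacing it by the $S$-operator $()^{\ast^{\prime}}$ and the defining formula $\mu^{\ast}(s,q)=\inf_{\gamma\in\Gamma}\mu([\gamma,s],q)$ by $\sigma^{\ast^{\prime}}([\alpha,a],q)=\inf_{s\in S}\sigma(s\alpha a,q)$. First I would fix an arbitrary $[\alpha,a]\in R$ and a $q\in Q$, and unwind membership in the left-hand side using the set-theoretic description recalled just before the proposition, namely $(\sigma_{t})^{\ast^{\prime}}=\{[\alpha,x]\in R : s\alpha x\in\sigma_{t}\ \forall s\in S\}$. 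This gives that $[\alpha,a]\in(\sigma_{t})^{\ast^{\prime}}$ if and only if $s\alpha a\in\sigma_{t}$ for every $s\in S$.

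Next I would translate the condition $s\alpha a\in\sigma_{t}$ through the definition of the $t$-level subset of the $Q$-fuzzy set $\sigma$: one has $s\alpha a\in\sigma_{t}$ if and only if $\sigma(s\alpha a,q)\geq t$ for all $q\in Q$. Combining this with the previous step, $[\alpha,a]\in(\sigma_{t})^{\ast^{\prime}}$ holds exactly when $\sigma(s\alpha a,q)\geq t$ for all $s\in S$ and all $q\in Q$.

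The only non-formal step is the passage to the infimum. Here I would fix $q\in Q$ and invoke the elementary fact that a family of reals is bounded below by $t$ precisely when its infimum is at least $t$: thus $\sigma(s\alpha a,q)\geq t$ for all $s\in S$ is equivalent to $\inf_{s\in S}\sigma(s\alpha a,q)\geq t$, i.e.\ to $\sigma^{\ast^{\prime}}([\alpha,a],q)\geq t$. Since this equivalence holds for each $q$ separately, the quantifier over $q$ passes through unchanged, so $[\alpha,a]\in(\sigma_{t})^{\ast^{\prime}}$ if and only if $\sigma^{\ast^{\prime}}([\alpha,a],q)\geq t$ for all $q\in Q$, which by the definition of the level subset of $\sigma^{\ast^{\prime}}$ is precisely $[\alpha,a]\in(\sigma^{\ast^{\prime}})_{t}$. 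Reading the chain of equivalences from end to end then yields the set equality $(\sigma_{t})^{\ast^{\prime}}=(\sigma^{\ast^{\prime}})_{t}$.

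I do not anticipate a genuine obstacle, since the argument is purely a chain of logical equivalences. The one point demanding care is the bookkeeping of the two quantifiers, $\forall s\in S$ and $\forall q\in Q$, while the infimum is taken over $s$ alone: one must fix $q$ before applying the infimum characterization so that the $q$-quantifier is left untouched. The non-emptiness hypothesis is used only to guarantee that the asserted identity compares two genuinely non-empty subsets of $R$.
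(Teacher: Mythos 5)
Your proposal is correct and follows essentially the same route as the paper: the same chain of biconditionals unwinding $[\alpha,a]\in(\sigma_t)^{\ast^{\prime}}$ through the set-theoretic description of $()^{\ast^{\prime}}$, the definition of the level subset, and the elementary fact that a family is bounded below by $t$ iff its infimum is at least $t$. If anything, your bookkeeping of the quantifier $\forall q\in Q$ (fixing $q$ before taking the infimum over $s$, so that the level-subset condition over all of $Q$ is preserved) is slightly more careful than the paper's own write-up, which works with a single fixed $q$.
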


\begin{proof}
Let $[\alpha,x]\in R,q\in Q$ and $t$ is as mentioned in the statement. Then%
\begin{align*}
\lbrack\alpha,x]  &  \in(\sigma_{t})^{\ast^{^{\prime}}}\Leftrightarrow s\alpha
x\subseteq\sigma_{t}\text{ }\forall s\in S\\
&  \Leftrightarrow\sigma(s\alpha x,q)\geq t\text{ }\forall s\in S\Leftrightarrow
\underset{s\in S}{\inf}\sigma(s\alpha x,q)\geq t\\
&  \Leftrightarrow\sigma^{\ast^{^{\prime}}}([\alpha,x],q)\geq t\Leftrightarrow
\lbrack\alpha,x]\in(\sigma^{\ast^{^{\prime}}})_{t}.
\end{align*}
Hence $(\sigma_{t})^{\ast^{^{\prime}}}=(\sigma^{\ast^{^{\prime}}})_{t}$
$.\newline$
\end{proof}

In what follows $S$ denotes a $\Gamma$-semigroup with unities$\cite{D1},L\,,R$ be its left and right operator semigroups respectively.

\begin{proposition}
If $Q$ be any non-empty set and $\mu\in QFI(R)(QFLI(R))$, then $\mu^{\ast}\in QFI(S)($respectively
$QFLI(S))$.
\end{proposition}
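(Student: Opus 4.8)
The plan is to verify directly that $\mu^{\ast}$ satisfies the defining inequalities of a $Q$-fuzzy left ideal (and, in the two-sided case, also right ideal) of $S$, working straight from the definition $\mu^{\ast}(a,q)=\inf_{\gamma\in\Gamma}\mu([\gamma,a],q)$. The entire argument rests on one structural identity in the right operator semigroup $R$, namely
\[
[\beta,x\gamma y]=[\beta,x][\gamma,y]=[\beta x\gamma,y]
\]
for all $x,y\in S$ and $\beta,\gamma\in\Gamma$. I would establish this first by checking that the pairs $(\beta x\gamma,y)$ and $(\beta,x\gamma y)$ in $\Gamma\times S$ are $\rho$-equivalent; this is exactly where the both-sided associative laws of Definition $2.4$ do the work, since $s(\beta x\gamma)y=s\beta(x\gamma y)$ for all $s\in S$ and $(\beta x\gamma)y\delta=\beta(x\gamma y)\delta$ for all $\delta\in\Gamma$. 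Since this is really a known property of $R$ from \cite{D1}, it may alternatively just be cited rather than reproved.

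Granting the identity, the left-ideal inequality is immediate. Fix $x,y\in S$, $\gamma\in\Gamma$, $q\in Q$. For every $\beta\in\Gamma$,
\[
\mu([\beta,x\gamma y],q)=\mu([\beta,x][\gamma,y],q)\geq\mu([\gamma,y],q)\geq\inf_{\gamma'\in\Gamma}\mu([\gamma',y],q)=\mu^{\ast}(y,q),
\]
where the first inequality is the $Q$-fuzzy left ideal property $\mu(uv,q)\ge\mu(v,q)$ of $\mu$ on $R$. As every term of the infimum defining $\mu^{\ast}(x\gamma y,q)$ is thus bounded below by $\mu^{\ast}(y,q)$, taking the infimum over $\beta$ yields $\mu^{\ast}(x\gamma y,q)\geq\mu^{\ast}(y,q)$, i.e. $\mu^{\ast}\in QFLI(S)$. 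One should also note in passing that $\mu^{\ast}$ is a genuine $Q$-fuzzy subset, since $\Gamma\neq\emptyset$ makes each infimum well-defined in $[0,1]$.

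For the two-sided statement I would run the symmetric computation using the right-ideal property: for every $\beta\in\Gamma$,
\[
\mu([\beta,x\gamma y],q)=\mu([\beta,x][\gamma,y],q)\geq\mu([\beta,x],q),
\]
and taking the infimum over $\beta$ of both sides gives $\mu^{\ast}(x\gamma y,q)\geq\inf_{\beta\in\Gamma}\mu([\beta,x],q)=\mu^{\ast}(x,q)$. Combined with the left-ideal part, this shows $\mu^{\ast}\in QFI(S)$ whenever $\mu\in QFI(R)$, which settles both assertions of the proposition.

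An alternative, more structural route bypasses the pointwise computation: by Theorem $3.6$ (valid for the semigroup $R$ by the Remark), $\mu\in QFLI(R)$ forces each level set $\mu_{t}$ to be a left ideal of $R$; the crisp Proposition $5.5$ then makes $(\mu_{t})^{\ast}$ a left ideal of $S$; Proposition $5.7$ identifies $(\mu_{t})^{\ast}=(\mu^{\ast})_{t}$; and a second application of Theorem $3.6$ to $S$ returns $\mu^{\ast}\in QFLI(S)$, with the two-sided case handled identically. The one subtlety on this route is matching the range of admissible $t$ across the three cited results, so I expect the direct computation to give the cleaner write-up. In either approach the only genuine obstacle is the operator-semigroup identity $[\beta,x\gamma y]=[\beta,x][\gamma,y]$; once it is in hand, the remaining $\inf$-manipulation is entirely routine.
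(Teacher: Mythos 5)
Your proof is correct, but your primary argument takes a genuinely different route from the paper's. The paper argues entirely through level subsets: it invokes Theorem $3.6$ to pass from $\mu\in QFI(R)$ to the crisp statement that each $\mu_{t}$, $t\in\operatorname{Im}(\mu)$, is an ideal of $R$; applies the crisp transfer result of Dutta and Adhikari (Proposition $5.5$) to conclude $(\mu_{t})^{\ast}$ is an ideal of $S$; identifies $(\mu_{t})^{\ast}=(\mu^{\ast})_{t}$ via Proposition $5.7$; and then applies Theorem $3.6$ in the reverse direction --- precisely your ``alternative, more structural route,'' including several lines devoted to non-emptiness of the relevant sets so that Propositions $5.5$ and $5.7$ apply. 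Your main argument instead verifies the defining inequalities pointwise from the operator-semigroup identity $[\beta,x\gamma y]=[\beta,x][\gamma,y]$, which you correctly reduce, via the both-sided associativity laws of Definition $2.4$, to the $\rho$-equivalence of $(\beta x\gamma,y)$ and $(\beta,x\gamma y)$ in $\Gamma\times S$; the ensuing $\inf$-manipulations (bounding every term of one infimum below, and monotonicity of $\inf$ in the right-ideal case) are sound. As for what each approach buys: the paper's route makes the fuzzy statement a formal corollary of known crisp results, but it must police which levels $t$ are admissible across the three cited results (exactly the subtlety you flag) and it relies on the standing hypothesis of unities, since Proposition $5.5$ is stated only for $\Gamma$-semigroups with unities; your computation is self-contained, avoids the level-set bookkeeping entirely, and in fact never invokes the unities, so it establishes the proposition in somewhat greater generality.

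One point your write-up underplays: Definition $3.1$ requires a $Q$-fuzzy ideal to be a \emph{non-empty} $Q$-fuzzy subset, and well-definedness of the infima (your parenthetical remark) is not the same thing --- the paper devotes an explicit argument to showing $(\mu^{\ast})_{t}\neq\phi$ for a suitable $t>0$. This is not a genuine gap in your approach, because your key identity closes it in one line: if $\mu([\alpha,s],q)=t>0$, then for any $x\in S$ and every $\beta\in\Gamma$ one has $\mu([\beta,x\alpha s],q)=\mu([\beta,x][\alpha,s],q)\geq\mu([\alpha,s],q)=t$, whence $\mu^{\ast}(x\alpha s,q)\geq t>0$. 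You should add that sentence to make the verification of Definition $3.1$ complete.
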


\begin{proof}
Suppose $\mu\in QFI(R)$. Then $\mu_{t}$ is an ideal of $R,$ $\ \forall t\in
{Im}(\mu).$ Hence $(\mu_{t})^{\ast}$ is an ideal of $S,$ $\forall t\in{Im}%
(\mu)(cf.$ Proposition $5.5).$ Let $q\in Q.$ Now since $\mu$ is $Q$-fuzzy ideal of $R$, $\mu$
is a non-empty $Q$-fuzzy subset of $R$. Hence for some $[\alpha,s]\in
R,\mu([\alpha,s],q)>0$. Then $\mu_{t}\neq\phi$ where $t:=\mu([\alpha,s],q)$. So by
the same argument applied above $(\mu_{t})^{\ast}\neq\phi$. Let $u\in(\mu
_{t})^{\ast}$. Then $[\beta,u]\in\mu_{t}$ for all $\beta\in\Gamma$. Hence
$\mu([\beta,u],q)\geq t$. This implies that $\underset{\beta\in\Gamma}{\inf}%
\mu([\beta,u],q)\geq t,$ $i.e.,$ $\mu^{\ast}(u,q)\geq t$. Hence $u\in(\mu^{\ast
})_{t}.$ Hence$(\mu^{\ast})_{t}\neq\phi$. Consequently, $(\mu_{t})^{\ast}%
=(\mu^{\ast})_{t}(cf.$ Proposition $5.7).$ It follows that $(\mu^{\ast}%
)_{t}$ is an ideal of $S$ for all $t\in{Im}(\mu).$ Hence $\mu^{\ast}$ is a
$Q$-fuzzy ideal of $S(cf.$ Theorem $3.6)$. The proof for $Q$-fuzzy left ideal
follows similarly.\newline
\end{proof}

\ In a similar fashion we can deduce the following proposition.

\begin{proposition}
If $Q$ be any non-empty set and $\sigma\in QFI(S)(QFLI(S)),$ then $\sigma^{\ast^{^{\prime}}}\in QFI(R)
($respectively $QFLI(R))$.
\end{proposition}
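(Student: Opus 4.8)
The plan is to prove this by transporting the level-subset argument of Proposition 5.9 across the symmetry between $S$ and $R$, replacing the operator $()^{\ast}$ by $()^{\ast^{\prime}}$. The ingredients I would line up are three: the level-subset criterion of Theorem 3.6, which by Remark 3.7 applies equally to the ordinary semigroup $R$; Proposition 5.6, which sends an ideal (resp.\ left ideal) of $S$ to an ideal (resp.\ left ideal) of $R$ under $()^{\ast^{\prime}}$; and the commutation identity $(\sigma_{t})^{\ast^{\prime}}=(\sigma^{\ast^{\prime}})_{t}$ of Proposition 5.8.

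Starting from $\sigma\in QFI(S)$, I would first apply Theorem 3.6 to get that $\sigma_{t}$ is an ideal of $S$ for every $t\in Im(\sigma)$. Proposition 5.6, in its two-sided form, then upgrades each $\sigma_{t}$ to an ideal $(\sigma_{t})^{\ast^{\prime}}$ of $R$. Rewriting this set as $(\sigma^{\ast^{\prime}})_{t}$ by means of Proposition 5.8 shows that every level subset of $\sigma^{\ast^{\prime}}$ is an ideal of $R$, and reading Theorem 3.6 in the reverse direction (for the semigroup $R$) yields $\sigma^{\ast^{\prime}}\in QFI(R)$. Running the same chain with the left-ideal clauses of Proposition 5.6 and Theorem 3.6 disposes of the $QFLI$ case.

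The step I expect to be the real obstacle is not any of these implications but the non-emptiness bookkeeping, exactly as in Proposition 5.9: I must know that the level subsets of $\sigma^{\ast^{\prime}}$ to which Theorem 3.6 is applied are actually non-empty, and that $\sigma^{\ast^{\prime}}$ is itself a non-empty $Q$-fuzzy subset of $R$. Here I would fix $s_{0}\in S$ and $q\in Q$ with $\sigma(s_{0},q)>0$, set $t:=\sigma(s_{0},q)$, observe $s_{0}\in\sigma_{t}$ so $\sigma_{t}\neq\phi$, and then note that because $\sigma_{t}$ is an ideal Proposition 5.6 forces $(\sigma_{t})^{\ast^{\prime}}\neq\phi$; Proposition 5.8 carries this over to $(\sigma^{\ast^{\prime}})_{t}\neq\phi$, so $\sigma^{\ast^{\prime}}$ attains a value $\geq t>0$ and is non-empty. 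This is precisely the place where the hypothesis that $\sigma$ is a genuine $Q$-fuzzy ideal (and not merely any $Q$-fuzzy subset) gets used.

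Finally, I note that the two defining inequalities for $\sigma^{\ast^{\prime}}$ can also be checked by hand, which I would do as a sanity cross-check: for a product $[\alpha,a][\beta,b]=[\alpha a\beta,b]$ in $R$, associativity gives $s(\alpha a\beta)b=(s\alpha a)\beta b$, so the right-ideal inequality $\sigma^{\ast^{\prime}}([\alpha a\beta,b],q)\geq\sigma^{\ast^{\prime}}([\alpha,a],q)$ comes straight from the right-ideal property of $\sigma$ applied with $u=s\alpha a$, while the left-ideal inequality follows merely from the inclusion $\{(s\alpha a)\beta b:s\in S\}\subseteq\{s^{\prime}\beta b:s^{\prime}\in S\}$, which forces the infimum over the smaller index set to dominate. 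This direct route makes clear that only the right-ideal direction consumes a hypothesis on $\sigma$, the left-ideal direction being automatic, but it still leaves the non-emptiness issue above as the one point requiring care.
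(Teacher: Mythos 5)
Your proposal is correct and takes essentially the same route as the paper: for this proposition the paper simply says it follows ``in a similar fashion'' from the proof of Proposition $5.9$, which is exactly your chain --- Theorem $3.6$ on $\sigma$, the crisp transfer of Proposition $5.6$, the commutation $(\sigma_{t})^{\ast^{\prime}}=(\sigma^{\ast^{\prime}})_{t}$ of Proposition $5.8$, and Theorem $3.6$ read backwards for the ordinary semigroup $R$ (via Remark $3.7$), together with the same non-emptiness bookkeeping on $\sigma_{t}$, $(\sigma_{t})^{\ast^{\prime}}$ and $(\sigma^{\ast^{\prime}})_{t}$. Your closing direct verification of the two inequalities (right-ideal inequality from $s(\alpha a\beta)b=(s\alpha a)\beta b$, left-ideal inequality automatic from the inclusion of index sets) is a correct extra check not in the paper, but it is supplementary rather than a different proof.
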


We can also deduce the following left operator analogues of the above propositions.

\begin{proposition}
If $Q$ be any non-empty set and $\delta\in QFI(L)(QFRI(L))$, then $\delta^{+}\in QFI(S)($respectively $QFRI(S))$.
\end{proposition}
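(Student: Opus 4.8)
The plan is to mimic, line for line, the proof of Proposition $5.9$, with the right operator semigroup $R$ and the map $()^{\ast}$ replaced throughout by the left operator semigroup $L$ and the map $()^{+}$. The three ingredients I would assemble are: the level-set criterion (Theorem $3.6$), which translates ``$\delta\in QFI(L)$'' into ``$\delta_{t}$ is an ideal of $L$ for every $t\in\operatorname{Im}(\delta)$'' and conversely; Proposition $5.3$, which guarantees that $A^{+}$ is an ideal (resp.\ right ideal) of $S$ whenever $A$ is an ideal (resp.\ right ideal) of $L$; and the left-operator analogue of Proposition $5.7$, namely the level-set identity $(\delta_{t})^{+}=(\delta^{+})_{t}$ valid for every $t\in[0,1]$ for which the two sets are non-empty.

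First I would record the level-set identity, since it is the bridge between the crisp statement of Proposition $5.3$ and the fuzzy conclusion. Fixing $s\in S$, the chain of equivalences
\begin{align*}
s\in(\delta_{t})^{+} &\Leftrightarrow [s,\gamma]\in\delta_{t}\ \forall\gamma\in\Gamma\\
&\Leftrightarrow \delta([s,\gamma],q)\geq t\ \forall\gamma\in\Gamma,\ \forall q\in Q\\
&\Leftrightarrow \underset{\gamma\in\Gamma}{\inf}\,\delta([s,\gamma],q)\geq t\ \forall q\in Q\\
&\Leftrightarrow \delta^{+}(s,q)\geq t\ \forall q\in Q\Leftrightarrow s\in(\delta^{+})_{t}
\end{align*}
settles it, using the definition $\delta^{+}(s,q)=\underset{\gamma\in\Gamma}{\inf}\,\delta([s,\gamma],q)$ at the penultimate step.

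With this in hand the main argument runs as follows. By Theorem $3.6$ each $\delta_{t}$ with $t\in\operatorname{Im}(\delta)$ is an ideal of $L$, so by Proposition $5.3$ the set $(\delta_{t})^{+}$ is an ideal of $S$. Since $\delta$ is a non-empty $Q$-fuzzy subset there is some $[a,\alpha]\in L$ with $\delta([a,\alpha],q)>0$; taking $t:=\delta([a,\alpha],q)$ keeps $\delta_{t}$ non-empty, and the equivalence above transports this to $(\delta^{+})_{t}\neq\phi$. Feeding the identity $(\delta_{t})^{+}=(\delta^{+})_{t}$ into the previous sentence shows that $(\delta^{+})_{t}$ is an ideal of $S$ for every $t\in\operatorname{Im}(\delta)$, whence a final application of Theorem $3.6$ gives $\delta^{+}\in QFI(S)$. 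For the parenthetical statement I would repeat the argument verbatim with ``ideal'' replaced by ``right ideal'' and the right-ideal clauses of Proposition $5.3$ and Theorem $3.6$ invoked, yielding $\delta^{+}\in QFRI(S)$.

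The computations are entirely routine, so I do not anticipate a genuine obstacle; the only points that reward care are keeping the quantifier $\forall q\in Q$ threaded consistently through the level-set equivalence, and deducing the non-emptiness of $(\delta^{+})_{t}$ from that of $\delta_{t}$ rather than assuming it. In particular the replacement of ``$\delta([s,\gamma],q)\geq t$ for all $\gamma$'' by ``$\underset{\gamma\in\Gamma}{\inf}\,\delta([s,\gamma],q)\geq t$'' is legitimate precisely because the threshold $t$ is a single constant independent of $\gamma$, and I would make sure this is the step carrying the weight of the identity.
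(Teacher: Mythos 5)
Your proposal is correct and is essentially the paper's own argument: the paper proves Proposition $5.9$ by exactly this route (Theorem $3.6$, the transfer result for crisp ideals, and the level-set identity of Proposition $5.7$) and then states that the left-operator analogue follows \emph{mutatis mutandis}, which is precisely the line-for-line translation you carry out, including the proof of $(\delta_{t})^{+}=(\delta^{+})_{t}$. If anything, you are slightly more careful than the source in threading the quantifier $\forall q\in Q$ through the level-set equivalences.
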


\begin{proposition}
If $Q$ be any non-empty set and $\eta\in QFI(S)(QFRI(S))$, then $\eta^{+^{^{\prime}}}\in QFI(L)
($respectively $QFRI(L)).$
\end{proposition}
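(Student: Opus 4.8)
The plan is to follow the template of Propositions 5.9 and 5.10 (of which the present statement is the left-operator analogue): reduce the claim about the $Q$-fuzzy ideal $\eta^{+\prime}$ to a statement about its level sets using the level-subset criterion of Theorem 3.6. The three ingredients I would assemble are (i) the left-operator level-set identity $(\eta_{t})^{+\prime}=(\eta^{+\prime})_{t}$, the left analogue of Proposition 5.8; (ii) the crisp Proposition 5.4, that $B^{+\prime}$ is a (right) ideal of $L$ whenever $B$ is a (right) ideal of $S$; and (iii) Theorem 3.6 itself, run in both directions.

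First I would assume $\eta\in QFI(S)$, so that by Theorem 3.6 each $\eta_{t}$ is an ideal of $S$ for $t\in Im(\eta)$. Next I would establish (i): for $[a,\alpha]\in L$ and $t\in[0,1]$,
\begin{align*}
[a,\alpha]\in(\eta_{t})^{+\prime} &\Leftrightarrow a\alpha s\in\eta_{t}\ \forall s\in S\\
&\Leftrightarrow\eta(a\alpha s,q)\geq t\ \forall s\in S,\ \forall q\in Q\\
&\Leftrightarrow\underset{s\in S}{\inf}\,\eta(a\alpha s,q)\geq t\ \forall q\in Q\\
&\Leftrightarrow\eta^{+\prime}([a,\alpha],q)\geq t\ \forall q\in Q\Leftrightarrow[a,\alpha]\in(\eta^{+\prime})_{t},
\end{align*}
the chain being identical to that of Propositions 5.7 and 5.8 except that the quantity governed by the level condition is now an infimum over $S$. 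Proposition 5.4 then gives that $(\eta_{t})^{+\prime}$ is an ideal of $L$ for each $t\in Im(\eta)$, and combining this with (i) shows $(\eta^{+\prime})_{t}$ is an ideal of $L$; its non-emptiness is checked exactly as in Proposition 5.9, since for $a\in\eta_{t}$ and $\gamma\in\Gamma$ the right-ideal property $a\gamma s\in\eta_{t}$ for all $s$ gives $[a,\gamma]\in(\eta_{t})^{+\prime}$. A final appeal to the converse half of Theorem 3.6 yields $\eta^{+\prime}\in QFI(L)$. The case $\eta\in QFRI(S)\Rightarrow\eta^{+\prime}\in QFRI(L)$ runs verbatim, invoking the ``(right)'' branch of Proposition 5.4.

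I expect the only genuine point of care to be step (i): verifying that ``$\underset{s}{\inf}\,\eta(a\alpha s,q)\geq t$ for all $q$'' is really equivalent to ``$\eta(a\alpha s,q)\geq t$ for all $s$ and all $q$'', together with the attendant bookkeeping that the level sets one feeds into Theorem 3.6 for $\eta^{+\prime}$ are indeed non-empty. As an independent cross-check I would verify the conclusion straight from the definition, avoiding level sets entirely: writing $[a,\alpha][b,\beta]=[a\alpha b,\beta]$, one has $\eta^{+\prime}([a\alpha b,\beta],q)=\underset{s\in S}{\inf}\,\eta(a\alpha b\beta s,q)$; the right-ideal inequality $\eta^{+\prime}([a,\alpha][b,\beta],q)\geq\eta^{+\prime}([a,\alpha],q)$ follows because $\{a\alpha(b\beta s):s\in S\}\subseteq\{a\alpha s^{\prime}:s^{\prime}\in S\}$ forces the infimum to be at least $\eta^{+\prime}([a,\alpha],q)$, while the left-ideal inequality follows from $\eta(a\alpha(b\beta s),q)\geq\eta(b\beta s,q)\geq\eta^{+\prime}([b,\beta],q)$ for every $s\in S$. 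This direct route confirms the result and makes transparent that the right-ideal inequality is automatic, whereas the left-ideal inequality is precisely where the hypothesis on $\eta$ is used.
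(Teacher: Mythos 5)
Your proof is correct and follows exactly the route the paper intends: the paper gives no explicit proof of this proposition, deducing it ``in a similar fashion'' as the left-operator analogue of Proposition 5.9, and your main argument --- the level-set identity $(\eta_{t})^{+\prime}=(\eta^{+\prime})_{t}$ (the left analogue of Proposition 5.8), the crisp Proposition 5.4, and Theorem 3.6 applied in both directions with the non-emptiness check --- is precisely that template transcribed to $L$. Your supplementary direct verification from the definition, including the observation that the right-ideal inequality $\eta^{+\prime}([a,\alpha][b,\beta],q)\geq\eta^{+\prime}([a,\alpha],q)$ holds for an arbitrary $Q$-fuzzy subset while only the left-ideal inequality uses the hypothesis on $\eta$, is sound and is a clarifying bonus the paper does not contain.
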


\begin{theorem}
Let $S$ be a $\Gamma$-semigroup with unities, $Q$ be any non-empty set and $L$ be its left operator
semigroup. Then there exist an inclusion preserving bijection $\sigma
\mapsto\sigma^{+^{^{\prime}}}$ between the set of all $Q$-fuzzy ideals $(Q$-fuzzy
right ideals$)$ of $S$ and set of all $Q$-fuzzy ideals $($resp. $Q$-fuzzy right
ideals$)$ of $L,$ where $\sigma$ is a $Q$-fuzzy ideal $($resp. $Q$-fuzzy right
ideal$)$ of $S.$
\end{theorem}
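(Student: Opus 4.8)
The plan is to realise $\sigma\mapsto\sigma^{+'}$ as one half of a pair of mutually inverse, order-preserving maps. The obvious candidate for the inverse is $\delta\mapsto\delta^{+}$. By the two propositions immediately preceding the theorem, $\sigma\mapsto\sigma^{+'}$ carries $QFI(S)$ into $QFI(L)$ (and $QFRI(S)$ into $QFRI(L)$), while $\delta\mapsto\delta^{+}$ carries $QFI(L)$ into $QFI(S)$ (and $QFRI(L)$ into $QFRI(S)$), so both maps are already well defined on the relevant classes. It then suffices to establish: (i) $(\sigma^{+'})^{+}=\sigma$ for every $\sigma\in QFI(S)$; (ii) $(\delta^{+})^{+'}=\delta$ for every $\delta\in QFI(L)$; (iii) both maps preserve inclusion. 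Granting (i) and (ii), the two maps are two-sided inverses of one another, hence each is a bijection, and (iii) upgrades this to an inclusion-preserving bijection.

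For (i), I would first unwind the definitions of $()^{+}$ and $()^{+'}$ to obtain $(\sigma^{+'})^{+}(a,q)=\inf_{\gamma\in\Gamma}\inf_{s\in S}\sigma(a\gamma s,q)$. Because $\sigma$ is a $Q$-fuzzy right ideal, each term satisfies $\sigma(a\gamma s,q)\geq\sigma(a,q)$, so the double infimum is $\geq\sigma(a,q)$. For the reverse inequality I would use the right unity $[\gamma_{0},f]$ of $S$: since $a\gamma_{0}f=a$, the single term with $\gamma=\gamma_{0}$ and $s=f$ equals $\sigma(a,q)$, forcing the infimum to be $\leq\sigma(a,q)$. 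This gives equality, and note that it used only the right-ideal property together with the right unity, so the same computation handles the $QFRI$ case verbatim.

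For (ii), the analogous unwinding gives $(\delta^{+})^{+'}([a,\alpha],q)=\inf_{s\in S}\inf_{\gamma\in\Gamma}\delta([a\alpha s,\gamma],q)$, and the multiplication rule $[a,\alpha][s,\gamma]=[a\alpha s,\gamma]$ in $L$ lets me rewrite each term as $\delta([a,\alpha][s,\gamma],q)$. Since $\delta$ is a right ideal of $L$, this is $\geq\delta([a,\alpha],q)$, giving one inequality. For the reverse I would take $s=e$ and $\gamma=\delta$ where $[e,\delta]$ is the left unity, so that the corresponding term is $\delta([a,\alpha][e,\delta],q)$; this equals $\delta([a,\alpha],q)$ precisely when $[a,\alpha][e,\delta]=[a,\alpha]$. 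This is the hard part: the left unity is immediately a \emph{left} identity of $L$, but its being a \emph{right} identity of $L$ is exactly what the ``$\Gamma$-semigroup with unities'' hypothesis secures, and I would quote this fact from \cite{D1} (that under the presence of unities $L$ is a monoid whose identity is the class of the left unity). Everything else in this step is routine manipulation of the defining infima.

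Finally, for (iii), inclusion preservation is immediate from monotonicity of the infimum: if $\sigma_{1}\subseteq\sigma_{2}$ then $\sigma_{1}^{+'}([a,\alpha],q)=\inf_{s}\sigma_{1}(a\alpha s,q)\leq\inf_{s}\sigma_{2}(a\alpha s,q)=\sigma_{2}^{+'}([a,\alpha],q)$, and dually $\delta\mapsto\delta^{+}$ is monotone. Combining (i)--(iii), $()^{+'}$ and $()^{+}$ are mutually inverse, inclusion-preserving maps between $QFI(S)$ and $QFI(L)$, so $\sigma\mapsto\sigma^{+'}$ is the asserted inclusion-preserving bijection. The $Q$-fuzzy right ideal version follows by exactly the same argument, since at no point did the computations use the left-ideal condition — only the right-ideal property and the unities of $S$ (equivalently the monoid structure of $L$).
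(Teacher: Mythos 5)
Your proposal is correct and follows essentially the same route as the paper: the inverse map $\delta\mapsto\delta^{+}$, well-definedness via the two preceding propositions, the identities $(\sigma^{+'})^{+}=\sigma$ and $(\delta^{+})^{+'}=\delta$ proved by the right-ideal inequality in one direction and by evaluating at the right unity of $S$ (resp.\ the left unity $[e,\delta]$ of $L$) in the other, and monotonicity of the infima for inclusion preservation. If anything, you are more careful than the paper at the one delicate step --- the paper silently writes $\mu([x,\alpha],q)=\mu([x,\alpha][e,\delta],q)$, whereas you correctly note that $[e,\delta]$ being a \emph{right} identity of $L$ needs the unities hypothesis and a citation to \cite{D1}.
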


\begin{proof}
Let $\sigma\in QFI(S)(QFRI(S))$ and $x\in S,q\in Q.$ Then%
\[
(\sigma^{+^{^{\prime}}})^{+}(x,q)=\underset{\gamma\in\Gamma}{\inf}%
\sigma^{+^{^{\prime}}}([x,\gamma],q)=\underset{\gamma\in\Gamma}{\inf}%
\underset{s\in S}{[\inf}\sigma(x\gamma s,q)]\geq\sigma(x,q).
\]
Hence $\sigma\subseteq(\sigma^{+^{^{\prime}}})^{+}.$ Let $[\gamma,f]$ be the
right unity of $S.$ Then $x\gamma f=x$ for all $x\in S.$ Then%
\[
\sigma(x,q)=\sigma(x\gamma f,q)\geq\underset{\alpha\in\Gamma}{\inf}\underset{s\in
S}{[\inf}\sigma(x\alpha s,q)]=\underset{\alpha\in\Gamma}{\inf}\sigma
^{+^{^{\prime}}}([x,\alpha],q)=(\sigma^{+^{^{\prime}}})^{+}(x,q).
\]
So $\sigma\supseteq(\sigma^{+^{^{\prime}}})^{+}.$ Hence $(\sigma^{+^{^{\prime
}}})^{+}=\sigma.$ Now let $\mu\in QFI(L)(QFRI(L)).$ Then%
\begin{align*}
(\mu^{+})^{+^{^{\prime}}}([x,\alpha],q)  &  =\underset{s\in S}{\inf}\mu
^{+}(x\alpha s,q)=\underset{s\in S}{\inf}[\underset{\gamma\in\Gamma}{\inf}%
\mu([x\alpha s,\gamma],q)\\
&  =\underset{s\in S}{\inf}[\underset{\gamma\in\Gamma}{\inf}\mu([x,\alpha
][s,\gamma],q)]\geq\mu([x,\alpha],q).
\end{align*}
So $\mu\subseteq(\mu^{+})^{+^{^{\prime}}}.$ Let $[e,\delta]$ be the left unity
of $L.$ Then%
\begin{align*}
\mu([x,\alpha],q)  &  =\mu([x,\alpha][e,\delta],q)\geq\underset{s\in S}{\inf
}[\underset{\gamma\in\Gamma}{\inf}\mu([x,\alpha][s,\gamma],q)]\\
&  =(\mu^{+})^{+^{^{\prime}}}([x,\alpha],q).
\end{align*}
So $\mu\supseteq(\mu^{+})^{+^{^{\prime}}}$ and hence $\mu=(\mu^{+}%
)^{+^{^{\prime}}}.$ Thus the correspondence $\sigma\mapsto\sigma^{+^{^{\prime
}}}$ is a bijection. Now let $\sigma_{1},\sigma_{2}\in QFI(S)(QFRI(S))$ be such
that $\sigma_{1}\subseteq\sigma_{2}.$ Then for all $[x,\alpha]\in L,q\in Q$%
\[
\sigma_{1}^{+^{^{\prime}}}([x,\alpha],q)=\underset{s\in S}{\inf}\sigma
_{1}(x\alpha s,q)\leq\underset{s\in S}{\inf}\sigma_{2}(x\alpha s,q)=\sigma
_{2}^{+^{^{\prime}}}([x,\alpha],q).
\]
Thus $\sigma_{1}^{+^{^{\prime}}}\subseteq\sigma_{2}^{+^{^{\prime}}}.$
Similarly we can show that if $\mu_{1}\subseteq\mu_{2}$ where $\mu_{1},\mu
_{2}\in QFI(L)(QFRI(L))$ then $\mu_{1}^{+}\subseteq\mu_{2}^{+}.$ Hence
$\sigma\mapsto\sigma^{+^{^{\prime}}}$ is an inclusion preserving bijection.
The rest of the proof follows from Proposition $5.11$ and Proposition
$5.12$.\newline
\end{proof}

In a similar way by using Proposition $5.9$ and Proposition $5.10$ we can
deduce the following theorem.

\begin{theorem}
Let $S$ be a $\Gamma$-semigroup with unities, $Q$ be any non-empty set and $R$ be its right operator
semigroup. Then there exist an inclusion preserving bijection $\sigma
\mapsto\sigma^{\ast^{^{\prime}}}$ between the set of all $Q$-fuzzy ideals $(Q$-fuzzy
left ideals$)$ of $S$ and set of all $Q$-fuzzy ideals $($resp. $Q$-fuzzy left
ideals$)$ of $R,$ where $\sigma$ is a $Q$-fuzzy ideal $($resp. $Q$-fuzzy left ideal$)$
of $S.$
\end{theorem}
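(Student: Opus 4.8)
The plan is to mirror the proof of Theorem 5.13 essentially verbatim, replacing the left operator semigroup $L$ by the right operator semigroup $R$, the maps $()^{+},()^{+^{\prime}}$ by $()^{\ast},()^{\ast^{\prime}}$, the word ``right'' by ``left'', and the right unity of $S$ by its left unity. Concretely, I must establish the two round-trip identities $(\sigma^{\ast^{\prime}})^{\ast}=\sigma$ for every $\sigma\in QFI(S)(QFLI(S))$ and $(\mu^{\ast})^{\ast^{\prime}}=\mu$ for every $\mu\in QFI(R)(QFLI(R))$; together these say precisely that $\sigma\mapsto\sigma^{\ast^{\prime}}$ and $\mu\mapsto\mu^{\ast}$ are mutually inverse. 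That both maps actually land in the advertised classes is already guaranteed by Proposition 5.9 and Proposition 5.10, so the whole content sits in the two equalities together with monotonicity.

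First I would unwind $(\sigma^{\ast^{\prime}})^{\ast}$. For $x\in S$ and $q\in Q$, the definitions give $(\sigma^{\ast^{\prime}})^{\ast}(x,q)=\underset{\gamma\in\Gamma}{\inf}\,\underset{s\in S}{\inf}\,\sigma(s\gamma x,q)$. Since $\sigma$ is a $Q$-fuzzy left ideal, $\sigma(s\gamma x,q)\geq\sigma(x,q)$ for all $s,\gamma$, so the double infimum is $\geq\sigma(x,q)$, i.e. $\sigma\subseteq(\sigma^{\ast^{\prime}})^{\ast}$. For the reverse inclusion I would invoke the left unity $[e,\delta]$ of $S$, which satisfies $e\delta x=x$; taking $s=e,\gamma=\delta$ in the infimum yields $(\sigma^{\ast^{\prime}})^{\ast}(x,q)\leq\sigma(e\delta x,q)=\sigma(x,q)$. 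Hence $(\sigma^{\ast^{\prime}})^{\ast}=\sigma$. This is the exact dual of the $(\sigma^{+^{\prime}})^{+}=\sigma$ step, now driven by the left-ideal inequality and the left (rather than right) unity.

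The genuinely new bookkeeping is in $(\mu^{\ast})^{\ast^{\prime}}=\mu$. For $[\alpha,x]\in R$ the definitions give $(\mu^{\ast})^{\ast^{\prime}}([\alpha,x],q)=\underset{s\in S}{\inf}\,\underset{\gamma\in\Gamma}{\inf}\,\mu([\gamma,s\alpha x],q)$, and the crucial observation is that in $R$ one has $[\gamma,s\alpha x]=[\gamma s\alpha,x]=[\gamma,s][\alpha,x]$, checked directly from the defining equivalence $\rho$ on $\Gamma\times S$ and the multiplication $[\alpha,a][\beta,b]=[\alpha a\beta,b]$. Thus each term $\mu([\gamma,s\alpha x],q)=\mu([\gamma,s][\alpha,x],q)$ is the value of $\mu$ on a \emph{left} multiple of $[\alpha,x]$, so the $Q$-fuzzy left-ideal property of $\mu$ gives $\mu([\gamma,s][\alpha,x],q)\geq\mu([\alpha,x],q)$ and hence $\mu\subseteq(\mu^{\ast})^{\ast^{\prime}}$. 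For the reverse inclusion I would use that the right unity $[\gamma_{0},f]$ of $S$ acts as a left identity of $R$, so that $[\gamma_{0},f][\alpha,x]=[\alpha,x]$; choosing $s=f,\gamma=\gamma_{0}$ collapses one term of the infimum to $\mu([\alpha,x],q)$, giving $(\mu^{\ast})^{\ast^{\prime}}([\alpha,x],q)\leq\mu([\alpha,x],q)$. Monotonicity is then immediate: if $\sigma_{1}\subseteq\sigma_{2}$ then $\sigma_{1}^{\ast^{\prime}}([\alpha,x],q)=\underset{s\in S}{\inf}\,\sigma_{1}(s\alpha x,q)\leq\underset{s\in S}{\inf}\,\sigma_{2}(s\alpha x,q)=\sigma_{2}^{\ast^{\prime}}([\alpha,x],q)$, and symmetrically $\mu_{1}\subseteq\mu_{2}$ forces $\mu_{1}^{\ast}\subseteq\mu_{2}^{\ast}$, so the bijection preserves inclusion in both directions.

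I expect the one delicate point to be the factorization $[\gamma,s\alpha x]=[\gamma,s][\alpha,x]$ in $R$ together with the companion fact that the right unity of $S$ is a left identity of $R$; both rest on the associativity axioms of the both-sided $\Gamma$-semigroup and the precise form of the defining equivalence on $\Gamma\times S$, exactly the ingredients recorded in \cite{D1}. Everything else is the formal dual of Theorem 5.13, and the left-ideal case runs in lockstep with the two-sided case, since the same inequalities and the same one-sided unities are used throughout.
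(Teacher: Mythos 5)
Your proposal is correct and is exactly the argument the paper intends: the paper gives no separate proof of this theorem, remarking only that it follows ``in a similar way'' from Propositions 5.9 and 5.10 by dualizing Theorem 5.13, and your write-up carries out precisely that dualization (the round-trip identities $(\sigma^{\ast^{\prime}})^{\ast}=\sigma$ and $(\mu^{\ast})^{\ast^{\prime}}=\mu$ via the factorization $[\gamma,s\alpha x]=[\gamma,s][\alpha,x]$ and the unities, plus monotonicity). Your flagged ``delicate point'' is indeed the only place where care is needed, and it checks out from the associativity axioms and the definition of $\rho$ in \cite{D1}, mirroring the paper's use of $[x\alpha s,\gamma]=[x,\alpha][s,\gamma]$ and the left unity in Theorem 5.13.
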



\end{document}